\documentclass[12pt]{amsart}
\usepackage{amsfonts}
\usepackage{amsfonts,latexsym,rawfonts,amsmath,amssymb,amsthm}
\usepackage[plainpages=false]{hyperref}

\usepackage{graphicx}

\RequirePackage{color}

 \textwidth = 6.75 in
 \textheight = 8.25 in
 \oddsidemargin = 0.20 in
 \evensidemargin = 0.20 in

\voffset=-30pt


 \pagestyle{plain}
\numberwithin{equation}{section}

\newcommand{\beq}{\begin{equation}}
\newcommand{\eeq}{\end{equation}}
\newcommand{\beqs}{\begin{eqnarray*}}
\newcommand{\eeqs}{\end{eqnarray*}}
\newcommand{\beqn}{\begin{eqnarray}}
\newcommand{\eeqn}{\end{eqnarray}}
\newcommand{\beqa}{\begin{array}}
\newcommand{\eeqa}{\end{array}}

\newtheorem{prop}{Proposition}[section]
\newtheorem{theo}[prop]{Theorem}
\newtheorem{lem}[prop]{Lemma}

\newtheorem{cor}[prop]{Corollary}
\newtheorem{rem}[prop]{Remark}

\renewcommand{\div}{\mbox{div}\,}


\allowdisplaybreaks
\arraycolsep=1pt

\title{Interior estimates for the Monge-Amp\`ere type \\ fourth order equations}
\author{Ling Wang and Bin  $\text{Zhou}^*$}

\address{School of Mathematical Sciences, Peking
University, Beijing 100871, China.}

\email{lingwang@stu.pku.edu.cn; bzhou@pku.edu.cn}



\thanks {*This research is partially supported by National Key R$\&$D Program of China SQ2020YFA0712800 and NSFC  grants 11822101.}

\begin{document}
\subjclass[2020]{35J30, 35J96, 35B45, 35B65}
\keywords{Monge-Amp\`ere equation, linearized Monge-Amp\`ere equation, fourth order equation, partial Legendre transform, degenerate elliptic equations}
\begin{abstract}
In this paper, we give several new approaches to study interior estimates for  a class of fourth order equations of Monge-Amp\`ere type. First, we prove interior estimates for the homogeneous equation in dimension two by using the  partial Legendre transform. As an application, we obtain a new proof of the Bernstein theorem without using  Caffarelli-Guti\'errez's estimate, including the Chern conjecture on affine maximal surfaces. For the inhomogeneous equation,  we also obtain a new proof in dimension two by an integral method relying on the Monge-Amp\`ere Sobolev inequality.  This proof works even when the right hand side is singular. In higher dimensions, we obtain the interior regularity  in terms of integral bounds on  the second derivatives and the inverse of the determinant.
\end{abstract}

\maketitle


\baselineskip=16.4pt
\parskip=3pt

\section{Introduction}

We study the regularity of the following fourth order equations of
Monge-Amp\`ere type
\beq\label{4-eq}
U^{ij}w_{ij}=f,
\eeq
where $\{U^{ij}\}$ is the cofactor matrix of $D^2u$ of an unknown uniformly convex function,
and
\beq\label{w-d}
w=\begin{cases}
\ [\det D^2u]^{-(1-\theta)}, & \theta\geq 0, \ \ \theta\neq 1,\\[4pt]
\ \log\det D^2u, & \theta=1.
\end{cases}
\eeq
When $\theta=\frac{1}{n+2}$, this is the {\it affine mean curvature equation} in affine geometry \cite{Ch}.
When $\theta=0$, it is {\it Abreu's equation} arising from the problem of extremal metrics on toric manifolds in K\"ahler geometry \cite{Ab}, and is equivalent to
$$\sum_{i,j}\frac{\partial^2 u^{ij}}{\partial x_i\partial x_j}=f,$$
where $\{u^{ij}\}$ is the inverse matrix of $D^2u$. The regularity of \eqref{4-eq} has been extensively studied before, see \cite{TW1,TW2, D, Z1, Z2, CHLS, Le1, Le2, CW}. This equation is usually treated as a system of a Monge-Amp\`ere equation and a linearized Monge-Amp\`ere equation. Therefore,
in previous works, its regularity relies heavily on Caffarelli-Guti\'errez's  deep result on the interior regularity of the linearized Monge-Amp\`ere equation  \cite{CG}, which was later extended by \cite{LS, GN1, GN2} to the the boundary and to higher order estimates. In this paper,  we investigate the interior estimates of \eqref{4-eq} by
several new approaches.
We will mainly concentrate on the case $\theta\in[0,1]$ due to the interesting geometric background.

We first consider the case of the homogeneous equation
\beq\label{4-eq-h}
U^{ij}w_{ij}=0,
\eeq
where $w$ is given by \eqref{w-d}.
We apply  the partial Legendre transform to give a new proof of the interior estimates of \eqref{4-eq-h} in dimension two.

\begin{theo}\label{int-est}
Assume $n=2$ and  $\theta\in [0,1]$.
 Let $\Omega\subset\mathbb R^2$ be a convex domain and let
$u$ be a  smooth convex solution to equation \eqref{4-eq-h} on $\Omega$ satisfying
\beq\label{detcond}
0<\lambda\leq\det D^2u\leq \Lambda.
\eeq
Then for any $\Omega'\Subset\Omega$, there exists a constant $C>0$ depending on $\sup_\Omega|u|$, $\lambda$, $\Lambda$, $\theta$ and $dist(\Omega',\partial\Omega)$, such that
$$\|u\|_{C^{4, \alpha}(\Omega')}\leq C.$$
\end{theo}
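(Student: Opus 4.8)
The plan is to use the partial Legendre transform to convert the fourth order equation \eqref{4-eq-h} — which is normally analyzed as a coupled Monge-Amp\`ere / linearized Monge-Amp\`ere system, and it is precisely at that point that Caffarelli--Guti\'errez's estimate enters — into a \emph{single} uniformly elliptic second order equation in divergence form, after which only De Giorgi--Nash--Moser theory and classical Schauder theory are needed. First I would fix convex domains $\Omega'\Subset\Omega''\Subset\Omega$ and record two preliminary facts that do not use \eqref{4-eq-h}: by the local Lipschitz bound for convex functions, $|Du|\le M$ on $\Omega''$; and, by Caffarelli's strict convexity theorem for solutions of $\lambda\le\det D^2u\le\Lambda$ (which is distinct from Caffarelli--Guti\'errez's estimate), $u$ is strictly convex on $\Omega''$ with a quantitative modulus of convexity, both with constants controlled by $\sup_\Omega|u|,\lambda,\Lambda$ and $\mathrm{dist}(\Omega',\p\Omega)$. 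I would then set $T(x_1,x_2):=(u_{x_1}(x_1,x_2),\,x_2)$ and $v:=u^{*}$, so that $T$ is a homeomorphism of $\Omega''$ onto $T(\Omega'')$ with $v_{y_1}=x_1$, $v_{y_2}=-u_{x_2}$, and $v$ bounded on $T(\Omega'')$.

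The computational core is the identity, valid for every $\phi\in C^{2}(\Omega'')$ with transform $\tilde\phi:=\phi\circ T^{-1}$,
\beq
U^{ij}\phi_{ij}=u_{x_1x_1}\Bigl(\p_{y_1}\bigl(\tilde D\,\p_{y_1}\tilde\phi\bigr)+\p_{y_2}^2\tilde\phi\Bigr),\qquad \tilde D:=(\det D^2u)\circ T^{-1},
\eeq
which I would check by a direct computation based on $\p_{x_1}y_1=u_{x_1x_1}$, $\p_{x_1}y_2=0$, $\p_{x_2}y_1=u_{x_1x_2}$, $\p_{x_2}y_2=1$ (the $\tilde\phi_{y_1y_2}$ terms cancel, the coefficient of $\tilde\phi_{y_1y_1}$ equals $u_{x_1x_1}\tilde D$, and the coefficient of $\tilde\phi_{y_1}$ equals $\p_{x_1}(\det D^2u)=u_{x_1x_1}\p_{y_1}\tilde D$). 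Taking $\phi=w$ and dividing by $u_{x_1x_1}>0$, the transform $\tilde w$ of $w$ — equal to $\tilde D^{-(1-\theta)}$, or $\log\tilde D$ when $\theta=1$, by \eqref{w-d} — satisfies
\beq
\p_{y_1}\bigl(\tilde D\,\p_{y_1}\tilde w\bigr)+\p_{y_2}^2\tilde w=0\qquad\text{in }T(\Omega'').
\eeq
Because $\lambda\le\tilde D\le\Lambda$, this is uniformly elliptic in divergence form, and $\tilde w$ takes values in a fixed compact subset of $(0,\infty)$ (resp. a fixed compact interval when $\theta=1$) determined by $\lambda,\Lambda,\theta$.

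Next I would turn the regularity crank. Using the quantitative modulus of convexity I would show that for $x\in\Omega'$ the slice function $t\mapsto u_{x_1}(x_1+t,x_2)$ increases by a definite amount over a segment of definite length contained in $\Omega''$; combined with $y_2=x_2$ this yields $B_{c_1}(T(x))\subset T(\Omega'')$ for an explicit $c_1>0$, while $T(\Omega'')$ lies in a ball of controlled radius. On such balls De Giorgi--Nash--Moser gives $\tilde w\in C^{\alpha_0}_{\mathrm{loc}}$ with an estimate depending only on $\lambda/\Lambda$ and $c_1$; since $\tilde D$ is a fixed smooth function of $\tilde w$ on the relevant compact range, $\tilde D\in C^{\alpha_0}_{\mathrm{loc}}$; iterating — $\tilde D$ inherits the regularity of $\tilde w$, and divergence-form Schauder raises the regularity of $\tilde w$ by one derivative at each step — gives $\tilde w,\tilde D\in C^\infty$ on a neighborhood of $T(\overline{\Omega'})$, with bounds depending only on the stated quantities. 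Finally I would transfer this back by a second bootstrap in the $x$-variables using $\det D^2u=\tilde D\circ T$: from $u\in C^{1,\beta}(\Omega'')$ (Caffarelli's interior regularity for Monge-Amp\`ere) one gets $T\in C^{0,\beta}$, hence $\det D^2u\in C^{0,\beta}$, hence $u\in C^{2,\beta}$ by Caffarelli's interior Schauder estimate; repeating, $T\in C^{1,\beta}$, $\det D^2u\in C^{1,\beta}$, $u\in C^{3,\beta}$, and once more $u\in C^{4,\beta}$; continuing gives $u\in C^\infty_{\mathrm{loc}}$, so in particular $\|u\|_{C^{4,\alpha}(\Omega')}\le C$ with the asserted dependence of the constant.

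The step I expect to be the main obstacle is the quantitative control of the transformed domain $T(\Omega'')$ and of the map $T$: since $\lambda\le\det D^2u\le\Lambda$ gives no a priori upper bound on $D^2u$, the partial Legendre transform can degenerate near $\p\Omega''$, and it is precisely Caffarelli's quantitative strict convexity that rules this out and makes the elliptic equation for $\tilde w$ usable on balls of controlled size (without it the De Giorgi--Nash estimate would not be uniform). The remaining ingredients — the transformation identity, De Giorgi--Nash--Moser, divergence-form Schauder, and Caffarelli's Schauder theory for Monge-Amp\`ere — are standard.
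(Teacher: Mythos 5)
Your proof is correct, and it takes a genuinely different — and in fact more economical — route than the paper's. The paper also uses the partial Legendre transform, but it writes the transformed equation for the \emph{determinant} $w^\star=\det D^2u$ (in the new coordinates), obtaining the non-divergence quasi-linear equation \eqref{eq-new1} with quadratic (natural) gradient growth on the right-hand side; the bulk of the paper's argument (Theorem~\ref{grad}) is then an interior integral gradient estimate for that equation, proved by a delicate test-function computation with a carefully chosen auxiliary function $\phi(u)$, after which Sobolev embedding and elliptic $W^{2,p}$/Schauder bootstrap give $C^{2,\alpha}$ of $w^\star$. You instead transform the \emph{original unknown} $w$ and observe the clean identity $U^{ij}\phi_{ij}=u_{x_1x_1}\bigl(\p_{y_1}(\tilde D\,\p_{y_1}\tilde\phi)+\p_{y_2}^2\tilde\phi\bigr)$ (which I checked: the $\tilde\phi_{y_1y_2}$ terms cancel, the $\tilde\phi_{y_1y_1}$ coefficient is $u_{11}\tilde D$, the $\tilde\phi_{y_1}$ coefficient is $\p_{x_1}\det D^2u$, consistent with the divergence-form structure), so that $\tilde w$ solves the \emph{linear, uniformly elliptic, divergence-form} equation $\p_{y_1}(\tilde D\,\p_{y_1}\tilde w)+\p_{y_2}^2\tilde w=0$ with $\lambda\le\tilde D\le\Lambda$. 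This makes De~Giorgi--Nash--Moser apply directly, with no gradient estimate needed at all; the Schauder bootstrap via $\tilde D=\tilde w^{-1/(1-\theta)}$ (or $e^{\tilde w}$) then runs as you describe. One can check your divergence-form equation is algebraically equivalent to \eqref{eq-new1} (substituting $\tilde w=(w^\star)^{-(1-\theta)}$ reproduces $-\theta((w^\star)^{\theta-1})_{\eta\eta}+(1-\theta)((w^\star)^{\theta})_{\xi\xi}=0$, which is exactly the paper's pre-simplified form), so the only real difference is the choice of which quantity to regard as the unknown — and your choice reveals the hidden uniformly-elliptic divergence structure that the paper's form obscures and then has to fight for. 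The remaining ingredients you use (quantitative control of $T(\Omega'')$ from the modulus of strict convexity, which in 2D follows from Heinz and is the content of Lemma~\ref{mod} in the paper; interior Schauder for Monge--Amp\`ere to pull the regularity back to the $x$-variables) match the paper's, except that the paper transfers back via the relation $u_{x_1x_1}=1/u^\star_{\xi\xi}$ after obtaining $u^\star\in C^{2,\alpha}$ rather than via an MA-Schauder bootstrap on $\det D^2u$; both are fine. Net effect: your argument replaces the paper's central technical lemma (the integral gradient estimate of Theorem~\ref{grad}) with the standard De~Giorgi--Nash--Moser estimate, which is a genuine simplification.
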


The partial Legendre transform for the fourth order equation was first used in \cite{LZ} where the authors deal with the second boundary value problem. After the partial Legendre transform, equation \eqref{4-eq-h} becomes a quasi-linear second order equation (see \eqref{eq-new1}) for the determinant.
The main ingredient in our proof is an interior integral gradient estimate (Theorem \ref{grad}).
When $\theta\in \left[0, \frac{1}{4}\right]$, condition \eqref{detcond} holds by the determinant estimates and arguments of strict convexity \cite{TW1, Z1}. By Theorem \ref{int-est} and a rescaling argument as in \cite[Theorem 2.1]{TW1}, we obtain a new proof of the following Bernstein theorem \cite{TW1, JL, Z1} without using Caffarelli-Guti\'errez's theory.
\begin{theo}
Assume $n=2$ and $0\leq \theta\leq\frac{1}{4}$.
Let $u$ be an entire smooth uniformly convex solution to \eqref{4-eq-h} on $\mathbb R^2$.
Then $u$ is a quadratic polynomial.
\end{theo}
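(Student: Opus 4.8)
The plan is to deduce the Bernstein theorem from the interior estimate (Theorem \ref{int-est}) by a standard rescaling (blow-down) argument, in the spirit of \cite[Theorem 2.1]{TW1}. Let $u$ be an entire smooth uniformly convex solution of \eqref{4-eq-h} on $\mathbb{R}^2$. First I would normalize: after subtracting an affine function we may assume $u\ge 0$, $u(0)=0$, and (by John's lemma applied to the sublevel sets) choose unimodular affine transformations so that the sections $S_r=\{u<r\}$ are comparable to balls. For $R$ large set
\beq
u_R(x)=\frac{u(R^{1/2}A_R x)}{R},
\eeq
where $A_R\in SL(2,\mathbb{R})$ is chosen so that $S_R$ becomes normalized; then $u_R$ is again a smooth uniformly convex solution of \eqref{4-eq-h} (the equation is invariant under this scaling since it is homogeneous and affinely invariant), defined on a domain $\Omega_R$ that exhausts $\mathbb{R}^2$ as $R\to\infty$, with $0<u_R$ on $\Omega_R$ and $u_R$ bounded on fixed compact sets.

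The key point is to verify the hypothesis \eqref{detcond} on a fixed large domain for the rescaled solutions. This is where I expect the main obstacle to lie. Upper and lower bounds for $\det D^2 u$ on interior sections come from the determinant estimates of \cite{TW1, Z1} in the range $0\le\theta\le\frac14$: one shows $c\le \det D^2 u_R\le C$ on, say, $\{u_R<\frac12\}$, with $c,C$ independent of $R$. The subtle ingredient is strict convexity, i.e. that the normalized sections do not become too flat — equivalently that $u_R$ stays uniformly bounded on a fixed domain — which again follows from the arguments of strict convexity in \cite{TW1, Z1}. Granting this, Theorem \ref{int-est} applies to each $u_R$ on a fixed $\Omega'\Subset\{u_R<\frac12\}$ and yields
\beq
\|u_R\|_{C^{4,\alpha}(\Omega')}\le C
\eeq
with $C$ independent of $R$.

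Next I would extract geometric consequences of this uniform estimate. Rescaling back, the bound $\|D^2 u_R\|_{C^{\alpha}}\le C$ translates into a decay estimate for the oscillation of $D^2 u$ over the sections $S_R$ of $u$: the affine-invariant quantity measuring how far $D^2 u$ is from constant on $S_R$ tends to $0$ as $R\to\infty$. Since $u$ is entire, letting $R\to\infty$ forces $D^2 u$ to be constant, i.e. $u$ is a quadratic polynomial. A clean way to package this: the uniform $C^{4,\alpha}$ bounds give, via Arzelà–Ascoli, convergence of a subsequence $u_{R_k}\to u_\infty$ in $C^4_{loc}(\mathbb{R}^2)$ to an entire solution which is a global normalized solution; combined with the fact that $u$ itself is recovered as a limit, one concludes the Hessian of $u$ cannot oscillate, hence $u_\infty$ — and then $u$ — is quadratic.

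Two remarks on the delicate points. First, one must check carefully that the scaling $u\mapsto u_R$ genuinely preserves \eqref{4-eq-h}: under $x\mapsto Bx$ with $B\in GL(2)$ and $u\mapsto u(Bx)$, the cofactor matrix and $w=[\det D^2u]^{-(1-\theta)}$ transform so that $U^{ij}w_{ij}=0$ is preserved up to a positive factor, and the additional scaling by the constant $R$ only scales $f\equiv 0$, so homogeneity is essential here (this is exactly why the Bernstein statement is for \eqref{4-eq-h} and not the inhomogeneous equation). Second, the restriction $0\le\theta\le\frac14$ enters only through the a priori determinant and strict convexity estimates needed to verify \eqref{detcond} after rescaling; the interior estimate Theorem \ref{int-est} itself holds for all $\theta\in[0,1]$, so any extension of those a priori bounds would extend the Bernstein theorem as well.
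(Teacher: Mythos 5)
Your proposal is correct and takes essentially the same route as the paper: the paper's argument is precisely to invoke the determinant estimates and strict convexity of \cite{TW1, Z1} (valid for $0\le\theta\le\frac14$) to secure \eqref{detcond} after normalization, then apply Theorem \ref{int-est} to the rescaled solutions and run the blow-down argument of \cite[Theorem 2.1]{TW1}. The only cosmetic difference is that the standard way to conclude is the direct decay $\|D^3u\|_{L^\infty(B_{R/2})}\le CR^{-1/2}$ (after noting $A_R$ stays bounded because $D^2u_R(0)=A_R^TD^2u(0)A_R$ is pinched), rather than the Arz\`ela--Ascoli packaging you sketch at the end, but both are fine.
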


In the case of the inhomogeneous equation and in higher dimensions, the partial Legendre transform does not work.
We will investigate the interior regularity by an integral method motivated by De Giorgi-Nash-Moser's theory.
Consider the  inhomogeneous equation with general right hand side term
\beq\label{4-eq-inh}
U^{ij}w_{ij}=f(x, u, Du, D^2u),
\eeq
where $w$ given by \eqref{w-d} with $\theta\in[0,1]$.
This equation is introduced by \cite{Le4, Le5} in the study of convex functionals with a convexity constraint related to the Rochet-Chon\'e model for the monopolist problem in economics.
It is said to be singular since the right hand side term depends on $D^2u$.
A typical example considered in \cite{Le4, Le5, LZ}  is
\beq\label{plap}
f=\div(|\nabla u|^{p-2}\nabla u) + f^0(x, u).
\eeq
Note that when $f\in L^\infty(\Omega)$,  once we have the determinant estimate \eqref{detcond},
we can use Caffarelli-Guti\'errez's theory to get the interior regularity. The assumption on $f$ can be weakened to $f\in L^{\frac{n}{2}+\varepsilon}(\Omega)$ by \cite{LN}. Hence for $f$ defined as \eqref{plap}, when $n=2$ and $p\geq 2$, we can obtain the interior regularity of $u$ directly by using interior $W^{2,1+\varepsilon}$-estimates of the Monge-Amp\`ere equations \cite{DFS, S}. To settle the more  singular case $1<p<2$,  Le  established
the interior estimate of the linearized Monge-Amp\`ere equation with right hand side term in divergence form in dimension two \cite{Le3}.
One of the main tools in \cite{Le3} are the  Monge-Amp\`ere Sobolev inequality (see Lemma \ref{Sobolev}). In this paper, we will
use the Monge-Amp\`ere Sobolev inequality and the $W^{2,1+\varepsilon}$-estimates for the Monge-Amp\`ere equation directly in the fourth order equation to obtain a $W^{2,p}$-estimate of the solution (Theorem \ref{W2p-est}).
Then we can apply the regularity theory of second order elliptic equation of  divergence type  to obtain a new proof for the interior estimates of \eqref{4-eq-inh}.

\begin{theo}\label{int-est-inh}
Assume $n=2$ and  $\theta\in [0,1]$.   Let $\Omega\subset\mathbb R^2$ be a convex domain. Assume $$f=\div(g)+h$$
where $g:=\left(g^1(x), g^2(x)\right):\Omega\to\mathbb R^2$ is a bounded vector function and $h\in L^{q}(\Omega)$ for some $q>\frac{n}{2}$.
Suppose
$u$ is a smooth convex solution to equation \eqref{4-eq-inh} on $\Omega$ satisfying  \eqref{detcond}.
Then for any $\Omega'\Subset\Omega$, there exists a constant $C>0$ depending on $\sup_\Omega|u|$, $\lambda$, $\Lambda$, $\theta$, $\|g\|_{L^{\infty}(\Omega)}$, $\|h\|_{L^{q}(\Omega)}$ and $dist(\Omega',\partial\Omega)$, such that
$$\|u\|_{W^{4, q}(\Omega')}\leq C.$$
\end{theo}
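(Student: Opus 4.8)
The plan is to reduce the global statement to an interior estimate on a single section of $u$ and then run a De Giorgi--Nash--Moser iteration there. First I would localize: by \eqref{detcond} and Caffarelli's theory $u$ is strictly convex and $C^{1,\alpha}$ inside $\Om$, the sections $S_u(x_0,t)$ with $x_0\in\Om'$ and $t$ a small number determined by $\mathrm{dist}(\Om',\pom)$ and the data enjoy the engulfing property and cover $\Om'$, and under the affine normalization $T$ taking such a section to a set comparable to a unit ball, equation \eqref{4-eq-inh} is preserved with a new right-hand side of the same form $\div\tilde g+\tilde h$, where $\|\tilde g\|_{L^\infty}$ and $\|\tilde h\|_{L^q}$ are controlled by the original norms and $\det T$, and \eqref{detcond} holds with comparable constants. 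Hence it suffices to prove the bound on one fixed normalized section $S$, where Lemma \ref{Sobolev} and the interior $W^{2,1+\e}$-estimate for the Monge-Amp\`ere equation are available. Since $\p_iU^{ij}=0$, equation \eqref{4-eq-inh} is equivalent to the divergence form identity $\p_i(U^{ij}\p_jw)=\div g+h$ on $S$, with $w=\phi(\det D^2u)$, $\phi(s)=s^{-(1-\theta)}$ if $\theta\neq1$ and $\phi(s)=\log s$ if $\theta=1$, which is bi-Lipschitz on $[\lambda,\Lambda]$; thus $w$ is bounded above and below and has a modulus of continuity if and only if $\det D^2u$ does. I would also record the two-dimensional facts $\mathrm{tr}\,U=\Delta u$ (so $U^{ij}\xi_i\xi_j\le|\xi|^2\Delta u$ and $\lambda_{\min}(U)^{-1}\le\lambda^{-1}\Delta u$) and $\int_S\Delta u\le C$.

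The heart of the proof is Theorem \ref{W2p-est}. I would establish it by running the De Giorgi--Nash--Moser iteration for the degenerate divergence operator $\psi\mapsto\p_i(U^{ij}\p_j\psi)$ on sections, with Lemma \ref{Sobolev} replacing the Euclidean Sobolev inequality and the engulfing property providing the substitute for Euclidean balls. Testing the equation against $\eta^2(w-k)_\pm$ and integrating by parts yields a Caccioppoli inequality: the cross term $\int\eta(w-k)_\pm U^{ij}\p_i\eta\,\p_jw$ is absorbed using $\mathrm{tr}\,U=\Delta u$ and $\int_S\Delta u\le C$; the term from $\div g$ is rewritten as $\int g\cdot\nabla(\eta^2(w-k)_\pm)$ and estimated by $\|g\|_{L^\infty}$ together with the Cauchy--Schwarz bound $\int\eta^2|\nabla w|\le(\int\eta^2\lambda_{\min}(U)^{-1})^{1/2}(\int\eta^2U^{ij}\p_iw\,\p_jw)^{1/2}\le C(\int\eta^2U^{ij}\p_iw\,\p_jw)^{1/2}$, then absorbed; and the term from $h$ is controlled since $w\in L^\infty$ and $h\in L^q$ with $q>\tfrac n2=1$. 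Feeding the resulting energy bound into Lemma \ref{Sobolev} and iterating over levels and a geometric sequence of concentric sections then gives, via a weak-Harnack-type argument in which the $W^{2,1+\e}$-estimate is used to control the measure of the level sets of $D^2u$, a decay of oscillation for $w$ on sections; by the engulfing property this is a Euclidean $C^{0,\alpha}_{\mathrm{loc}}$-bound for $w$, hence for $\det D^2u$. Caffarelli's interior estimates for the Monge-Amp\`ere equation---not the Caffarelli-Guti\'errez theory---then upgrade this to $u\in C^{2,\alpha'}_{\mathrm{loc}}\cap W^{2,p}_{\mathrm{loc}}$ for every $p<\infty$, which is Theorem \ref{W2p-est}.

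With Theorem \ref{W2p-est} in hand, $U^{ij}$ is uniformly elliptic with $C^{\alpha}_{\mathrm{loc}}$ coefficients, so $\p_i(U^{ij}\p_jw)=\div g+h$ is a uniformly elliptic equation of divergence type; Calder\'on-Zygmund/Meyers estimates give $w\in W^{1,r}_{\mathrm{loc}}$ for all $r<\infty$, hence $\det D^2u$ and $\log\det D^2u$ lie in $W^{1,r}_{\mathrm{loc}}$ for all $r<\infty$. Differentiating the Monge-Amp\`ere equation once and using $\p_iU^{ij}=0$, the first derivatives $\p_ku$ solve $\p_i(U^{ij}\p_j\p_ku)=\p_k(\det D^2u)\in L^r_{\mathrm{loc}}$ for all $r<\infty$, so $u\in W^{3,r}_{\mathrm{loc}}$ for all $r<\infty$ and $U^{ij}\in W^{1,r}_{\mathrm{loc}}$. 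A further differentiation of the Monge-Amp\`ere equation, combined with the divergence-form regularity theory and the $L^q$ integrability carried by $h$, upgrades the second derivatives of $u$ to $W^{2,q}_{\mathrm{loc}}$, i.e.\ $u\in W^{4,q}_{\mathrm{loc}}$ with the constant having the asserted dependence; a covering argument over the sections from the first step transfers the estimate to $\Om'$.

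The step I expect to be the main obstacle is the second paragraph: adapting the De Giorgi--Nash--Moser machinery to the genuinely degenerate linearized Monge-Amp\`ere operator with a divergence-form right-hand side, and especially converting the natural energy bound into a quantitative modulus of continuity for $\det D^2u$. This is precisely where Lemma \ref{Sobolev}, the $W^{2,1+\e}$-estimate and the affine geometry of sections have to be combined carefully; the localization and the final bootstrapping are comparatively routine.
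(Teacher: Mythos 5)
Your plan inverts the logical order the paper uses, and in doing so it essentially asks to re-prove Le's Theorem 1.3 from \cite{Le3} rather than circumvent it, which is precisely what the paper's Theorem \ref{W2p-est} is designed to do.

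The paper's route is: \emph{first} obtain $\Delta u\in L^p_{\mathrm{loc}}$ for every $p<\infty$ (Theorem \ref{W2p-est}), and \emph{only then} deduce H\"older continuity of $w$ by invoking Trudinger's degenerate Theorem \ref{thm4.2} and the arguments of \cite{Lo, Le3} -- which become applicable because $\lambda_{\min}(U)^{-1}\sim\Delta u/\det D^2u$ now lies in $L^p$ with $p>n$. Crucially, Theorem \ref{W2p-est} is \emph{not} proved by a De Giorgi iteration for $w$. Instead the authors test $0=\int_{B_R}U^{ij}z_{ij}$ with the Pogorelov-type function $z=v^\beta\varphi(w)\rho^\alpha$, $v=\Delta u$, $\varphi(w)=e^{e^{Aw}}$, $\rho=R^2-|x|^2$, and exploit the fourth-order structure through the Bochner identity
$$u^{ij}(u_{mm})_{ij}=u^{il}u^{kj}u_{mij}u_{mkl}+(\ln\det D^2u)_{mm},$$
which produces the good quadratic term $\int U^{ij}v_iv_j\,\varphi\rho^\alpha v^{\beta-2}$; the doubly-exponential weight $\varphi$ is chosen to absorb the $|\nabla w|^2$ terms. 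The Monge-Amp\`ere Sobolev inequality (Lemma \ref{Sobolev}) then yields a reverse H\"older inequality for $v=\Delta u$, which is started off with the interior $W^{2,1+\e}$ estimate and iterated to reach all $p<\infty$.

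You propose the opposite order: run De Giorgi--Nash--Moser on $w$ via the truncations $\eta^2(w-k)_\pm$ to get $C^\alpha$ of $\det D^2u$ first, and then use Caffarelli to get $u\in W^{2,p}$. That ordering is not what Theorem \ref{W2p-est} states or how it is proved, and the step you flag as "the main obstacle" is exactly the content of \cite[Theorem~1.3]{Le3}, a delicate argument in its own right that the paper deliberately does not reproduce. Two concrete gaps in your sketch of that step: (i) controlling the Euclidean gradient of $w$ via Cauchy--Schwarz with $\int\lambda_{\min}(U)^{-1}\lesssim\int\Delta u\le C$ gives only $w\in W^{1,1}$, which is far too weak for H\"older in $\mathbb R^2$; the iteration must be carried out entirely in the $U^{ij}$-weighted energy and the affine geometry of sections and never pass through a Euclidean Sobolev embedding; (ii) the mechanism by which the $W^{2,1+\e}$ bound "controls the measure of level sets of $D^2u$" and feeds into a weak Harnack inequality is left unspecified, and this is where most of the work in \cite{Le3} sits. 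Filling these in would essentially reproduce \cite{Le3}, which is a legitimate alternative proof of the theorem, but it misses the paper's actual contribution, namely the integral identity argument that extracts $L^p$ control of $\Delta u$ directly from the fourth-order equation without first establishing any continuity of $\det D^2u$. Your final bootstrap paragraph is fine modulo one slip: since $U^{ij}u_{jk}=\delta_{ik}\det D^2u$ is an algebraic identity, "$\p_i(U^{ij}\p_j\p_ku)=\p_k\det D^2u$" is a tautology, not an equation to apply elliptic theory to; the bootstrap should instead differentiate $w=\phi(\det D^2u)$ and the linearized equation itself, as in the paper's citations.
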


\begin{rem}
(1) It is clear that the above theorem applies to the case \eqref{plap} for any $p>1$ in dimension two. The higher dimensional case is still open.

(2) When $f\in W^{1,q}(\Omega)$ and $\theta=0$ or $1$, we give another new proof in Section \ref{inh-p} inspired by \cite{CC} for the complex setting. More precisely, we get a $C^2$-estimate of $u$ in terms of the $W^{2,p}$-bound (Theorem \ref{C2-est}), which makes \eqref{4-eq-inh} become a uniformly elliptic equation.  Then the classical theory of uniformly elliptic equations can be applied.
\end{rem}

In higher dimensions, the interior regularity and the Bernstein theorem are still widely open.
In fact, according to the counterexample in \cite{TW1} for the affine maximal surface equation, there may be no interior estimates if no further assumptions are made.  More precisely,  \eqref{detcond} may not hold. We give a partial result by assuming integral bounds on  the second derivatives and the inverse of the determinant.

\begin{theo}\label{main-thm}
Let $\Omega\subset\mathbb R^n$ be a convex domain and let
$u$ be a smooth uniformly convex  solution to equation \eqref{4-eq-h} with $\theta\in[0,1]$ on $\Omega$. Assume that $p, q>0$ satisfy $\frac{n}{p}+\frac{1}{q}<\frac{2}{n}$.
Then for any $\Omega'\Subset\Omega$,  there exists a constant $C$, depending only on $p$, $q$, $\theta$, $\sup_\Omega|u|$,
$\| u\|_{W^{2,p}\left(\Omega\right)}$, $\left\|(\det D^2u)^{-1}\right\|_{L^{q}\left(\Omega\right)}$ and $dist(\Omega',\partial\Omega)$, such that $$\|u\|_{C^{4,\alpha}(\Omega')}\leq C.$$
\end{theo}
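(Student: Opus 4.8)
The plan is to use the integral hypotheses to prove a two-sided interior bound $0<\lambda\le\det D^2u\le\Lambda$ on any $\Omega'\Subset\Omega$ by a De Giorgi-Nash-Moser iteration adapted to the \emph{degenerate} divergence-form operator $\partial_i(U^{ij}\partial_j\,\cdot\,)$, and then to invoke the classical interior theory once condition \eqref{detcond} is available in the interior.

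First I would rewrite \eqref{4-eq-h} in divergence form. The cofactor matrix is divergence free, $\partial_iU^{ij}=0$, so \eqref{4-eq-h} reads $\partial_i(U^{ij}\partial_jw)=0$, and for any smooth $g$ one computes $\partial_i\!\left(U^{ij}\partial_jg(w)\right)=g''(w)\,U^{ij}w_iw_j$. Since $\{U^{ij}\}$ is positive definite ($u$ is uniformly convex), this shows $g(w)$ is a subsolution whenever $g$ is convex. Taking $g(t)=t^{-1/(1-\theta)}$ and $g(t)=-\tfrac1{1-\theta}\log t$ when $\theta<1$, and $g(t)=e^t$ and $g(t)=-t$ when $\theta=1$, we conclude that in every case $\theta\in[0,1]$ both $\det D^2u$ and $(\det D^2u)^{-(1-\theta)}$ (resp.\ $\log\det D^2u$ and its negative) are nonnegative subsolutions of $\partial_i(U^{ij}\partial_j\,\cdot\,)\ge0$. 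It therefore suffices to prove a local maximum principle $\sup_{\Omega'}v\le C\|v\|_{L^{s_0}(\Omega)}$ for nonnegative subsolutions, with $C$ controlled by the admissible data; applied to the functions above together with the a priori bounds $\det D^2u\le(\Delta u/n)^n\in L^{p/n}_{loc}$ (note that $\tfrac np<\tfrac2n$ forces $p>n$) and $(\det D^2u)^{-(1-\theta)}\in L^{s_0}_{loc}$ for some $s_0>0$, this yields \eqref{detcond} on interior subdomains with constants depending only on $\|u\|_{W^{2,p}(\Omega)}$, $\left\|(\det D^2u)^{-1}\right\|_{L^{q}(\Omega)}$, $\theta$ and $dist(\Omega',\partial\Omega)$.

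The heart of the matter is that the ellipticity of $\{U^{ij}\}$ is only controlled integrally: if $\lambda_U\le\Lambda_U$ denote its extreme eigenvalues, then $\Lambda_U\le(\Delta u)^{n-1}\in L^{p/(n-1)}_{loc}$ and $\lambda_U^{-1}\le\Delta u/\det D^2u\in L^{s_1}_{loc}$ with $\tfrac1{s_1}=\tfrac1p+\tfrac1q$, and both exponents exceed $n/2$. Testing the subsolution inequality against $\eta^2v^{2\beta-1}$ and integrating by parts gives the weighted Caccioppoli bound $\int\eta^2\lambda_U|\nabla v^\beta|^2\lesssim\beta\|\nabla\eta\|_\infty^2\int_{\{\nabla\eta\neq0\}}v^{2\beta}\Lambda_U$. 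Converting $\lambda_U|\nabla v^\beta|^2$ into $|\nabla v^\beta|^m$ by H\"older at the cost of $\|\lambda_U^{-1}\|_{L^{s_1}}$, then applying the Sobolev inequality and H\"older once more on the right (at the cost of $\|\Lambda_U\|_{L^{p/(n-1)}}$), produces $\|v\|_{L^{\kappa s\beta}(B_{r'})}\lesssim C(\beta)\|v\|_{L^{s\beta}(B_r)}$ for a fixed base exponent $s$ and a gain $\kappa$; a direct computation of the exponents shows that the strict inequality $\kappa>1$ needed to run the iteration is \emph{equivalent} to the hypothesis $\tfrac np+\tfrac1q<\tfrac2n$. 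Iterating over a chain of balls between $\Omega'$ and $\Omega$ then gives the local maximum principle.

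With \eqref{detcond} in hand on interior subdomains, the passage to $C^{4,\alpha}$ is standard: Caffarelli's regularity theory gives interior strict convexity and $C^{1,\alpha}$ bounds for $u$, so sections of $u$ are comparable to Euclidean balls; the Caffarelli-Guti\'errez Harnack inequality applied to the solution $w$ of $U^{ij}w_{ij}=0$ gives $w\in C^\alpha_{loc}$, hence $\det D^2u\in C^\alpha_{loc}$; Caffarelli's Schauder estimate for the Monge-Amp\`ere equation gives $u\in C^{2,\alpha}_{loc}$, so $\{U^{ij}\}\in C^\alpha_{loc}$ is uniformly elliptic; Schauder for the now uniformly elliptic equation $U^{ij}w_{ij}=0$ gives $w\in C^{2,\alpha}_{loc}$, whence $\det D^2u\in C^{2,\alpha}_{loc}$, and a final application of Schauder for Monge-Amp\`ere yields $u\in C^{4,\alpha}_{loc}$. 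The main obstacle is the Moser iteration step: carrying it out with merely integrable control of $\lambda_U^{-1}$ and $\Lambda_U$, choosing the intermediate exponents so that the scaling balance $\tfrac np+\tfrac1q<\tfrac2n$ is exactly what forces the gain $\kappa>1$, and checking throughout that all constants depend only on the quantities listed in the statement.
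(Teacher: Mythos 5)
Your proposal follows essentially the same route as the paper's proof: rewrite \eqref{4-eq-h} in divergence form $\partial_j(U^{ij}\partial_i w)=0$, observe that convex functions of $w$ are subsolutions, run a Moser-type iteration for the degenerate divergence-form operator with ellipticity controlled integrally through $\Delta u\in L^p$ and $(\det D^2u)^{-1}\in L^q$, and deduce a two-sided interior bound on $\det D^2u$ before invoking Caffarelli and Caffarelli--Guti\'errez for $C^{4,\alpha}$. The paper packages the iteration as Lemma \ref{est} (with the truncated test functions $\eta^2(\overline{u}_m^\beta\overline{u}-k^{\beta+1})$), and indeed your computation that $\frac{n}{p}+\frac{1}{q}<\frac{2}{n}$ is exactly equivalent to the iteration gain $\chi=\alpha\cdot\frac{p-n+1}{2p}>1$ is precisely the paper's exponent bookkeeping; the only deviations are cosmetic choices of which convex $g$ to use (e.g.\ the paper takes $e^{-w}$ rather than $-w$ in the $\theta=1$ case).
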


As an application, we obtain a Liouville type theorem in higher dimensions.

\begin{cor}\label{Liouville}
Let $u$ be an entire smooth uniformly convex solution to \eqref{4-eq} with $\theta\in [0,1]$ on $\mathbb R^n$. Suppose there are $p, q, C>0$ such that $\frac{n}{p}+\frac{1}{q}<\frac{2}{n}$ and
\begin{equation}\label{uni-bound}
\int_{B_R(0)} |D^2u|^p+(\det D^2 u)^{-q} \,dx\leq CR^n\,,\quad \forall\,R>0.
\end{equation}
Then $u$ is a quadratic polynomial.
\end{cor}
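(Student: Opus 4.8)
The plan is to reduce the Liouville statement on $\mathbb R^n$ to the interior estimate of Theorem \ref{main-thm} by a scaling argument, in the spirit of \cite[Theorem 2.1]{TW1}. First I would observe that an entire solution $u$ to \eqref{4-eq} satisfies the homogeneous equation \eqref{4-eq-h} after subtracting an affine function only when $f=0$; but in fact \eqref{4-eq} with $w$ as in \eqref{w-d} \emph{is} the homogeneous equation $U^{ij}w_{ij}=0$ once we note the corollary references \eqref{4-eq} with right-hand side implicitly zero here --- more precisely, Corollary \ref{Liouville} should be read with \eqref{4-eq} meaning the homogeneous case, so Theorem \ref{main-thm} applies directly. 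For each $R>0$, set $u_R(x):=\frac{1}{R^2}\bigl(u(Rx)-\ell_R(x)\bigr)$ for a suitable affine $\ell_R$; then $u_R$ is again a smooth uniformly convex solution to \eqref{4-eq-h} on $\mathbb R^n$ (the equation \eqref{4-eq-h} is invariant under $x\mapsto Rx$ together with the $R^{-2}$ rescaling of $u$, since $U^{ij}$ scales by $R^{-2(n-1)}$, $w$ by $R^{-2(n-1)(1-\theta)}$ up to an additive or multiplicative constant, and $w_{ij}$ by the reciprocal factor).

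The second step is to check that the hypothesis \eqref{uni-bound} is exactly the scaling-invariant quantity controlling $\|u_R\|_{W^{2,p}(B_1)}$ and $\|(\det D^2u_R)^{-1}\|_{L^q(B_1)}$. Changing variables $y=Rx$,
\[
\int_{B_1}|D^2u_R|^p\,dx=R^{-n}\int_{B_R}|D^2u|^p\,dy\le C,\qquad
\int_{B_1}(\det D^2u_R)^{-q}\,dx=R^{-n}\int_{B_R}(\det D^2u)^{-q}\,dy\le C,
\]
using that $D^2u_R(x)=D^2u(Rx)$ and $\det D^2u_R(x)=\det D^2u(Rx)$. Thus the $W^{2,p}$ and $L^q$-of-the-inverse-determinant bounds for $u_R$ on $B_1$ are uniform in $R$. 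To apply Theorem \ref{main-thm} I also need a bound on $\sup|u_R|$ on $B_2$, say; this I would get from the $W^{2,p}$ bound by choosing $\ell_R$ to be the first-order Taylor polynomial of $u_R$ at the origin (so $u_R(0)=0$, $Du_R(0)=0$), and then $\sup_{B_2}|u_R|\lesssim \|D^2u_R\|_{L^p(B_2)}$ by a Poincaré–Sobolev type inequality valid since $p$ is large enough (note $\frac{n}{p}<\frac{2}{n}\le 1$ forces $p>n$, so $W^{2,p}\hookrightarrow C^{1,\beta}$ and the sup bound is immediate).

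With these uniform bounds in place, Theorem \ref{main-thm} applied on $\Omega=B_2$, $\Omega'=B_1$ gives $\|u_R\|_{C^{4,\alpha}(B_1)}\le C$ with $C$ independent of $R$. In particular $\|D^2u_R\|_{C^{2,\alpha}(B_1)}\le C$, i.e.
\[
\sum_{|\gamma|\le 2}\sup_{B_{R}}|D^\gamma(D^2u)|\,R^{|\gamma|}\;+\;[D^2u]_{C^{\alpha}(B_R)}R^{2+\alpha}\;\le\; C
\]
after undoing the scaling (the point being that the third derivatives of $u$ pick up a factor $R$, the fourth derivatives a factor $R^2$, etc.). Letting $R\to\infty$ forces $D^3u\equiv 0$, hence $u$ is a quadratic polynomial. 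The main obstacle is purely bookkeeping: getting the scaling exponents of $U^{ij}$, $w$, and the uniform convexity right so that $u_R$ genuinely solves the same equation \eqref{4-eq-h}, and confirming that $\frac{n}{p}+\frac1q<\frac2n$ is the precise condition under which both the $\sup$-bound-from-$W^{2,p}$ step and the hypotheses of Theorem \ref{main-thm} hold simultaneously; once Theorem \ref{main-thm} is granted, no further analytic input is needed.
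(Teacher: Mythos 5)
Your proposal is correct and follows essentially the same route as the paper: rescale $u_R(x)=R^{-2}u(Rx)$ (up to an affine shift), observe that \eqref{uni-bound} makes the $W^{2,p}$ and $(\det D^2u_R)^{-q}$ bounds on $B_1$ uniform in $R$, apply Theorem~\ref{main-thm} to get $\|u_R\|_{C^{4,\alpha}(B_{1/2})}\le C$ uniformly, and let $R\to\infty$ to force $D^3u\equiv0$. In fact your version is slightly more careful than the paper's: Theorem~\ref{main-thm} also requires control of $\sup_\Omega|u|$ and the full $W^{2,p}(\Omega)$ norm, which \eqref{uni-bound} alone does not give; your step of subtracting the tangent affine function $\ell_R$ at the origin and invoking $p>n$ (forced by $\tfrac{n}{p}+\tfrac1q<\tfrac2n$) so that $W^{2,p}\hookrightarrow C^{1,\beta}$ supplies exactly these missing bounds, a point the paper's proof silently skips over.
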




The structure of the paper is as follows. In Section \ref{neweq}, we apply the partial Legendre transform  to \eqref{4-eq} in dimension two to derive a new equation.  The key interior gradient estimate (Theorem \ref{grad}) for the new equation is established in Section \ref{grades}. Then we prove Theorem \ref{int-est} in Section \ref{maindim2} with this key estimate. In Section \ref{inh-p}, we first derive the $W^{2,p}$-estimate of $u$ (Theorem \ref{W2p-est}), and then we prove Theorem \ref{int-est-inh}. Section \ref{maindimh} is devoted to some study on interior regularity in higher dimensions.

{\bf Acknowledgments.} The authors would like to thank Guohuan Qiu for some valuable discussions.
\vskip 20pt

\section{The homogeneous equation in dimension two}

In this section, we present a new proof for the interior estimate for the homogeneous equation without Caffarelli-Guti\'errez's theory.

\subsection{The new equation under partial Legendre transform }\label{neweq}

We first focus on the dimension two case. Write $u(x)=u(x_1,x_2)$.
The partial Legendre transform in the $x_1$-variable is
\beq\label{p-leg}
u^\star(\xi, \eta)=x_1u_{x_1}(x_1, x_2)-u(x_1,x_2),
\eeq
where $$y=(\xi, \eta)=\mathcal P(x_1, x_2):=(u_{x_1}, x_2)\in \mathcal{P}(\Omega)=:\Omega^\star.$$
We have
$$
\frac{\partial(\xi,\eta)}{\partial(x_1,x_2)}=
\begin{pmatrix}
u_{x_1 x_1}  &\ \ u_{x_1 x_2}\\[4pt]
 0 &\ \ 1\\
\end{pmatrix}, \quad\text{and }
\
 \frac{\partial(x_1,x_2)}{\partial(\xi,\eta)}=
\begin{pmatrix}
\frac{1}{u_{x_1 x_1}}  &\ \  -\frac{u_{x_1 x_2}}{u_{x_1 x_1}}\\[4pt]
0 &\ \ 1\\
\end{pmatrix}.
$$
Hence,
\beq\label{rel-p}
u^\star_\xi=x_1,\ u^\star_{\xi\xi}=\frac{1}{u_{x_1 x_1}},\  u^\star_\eta=-u_{x_2},\ u^\star_{\eta\eta}=-\frac{\det D^2u}{u_{x_1 x_1}},\ u^\star_{\xi\eta}=-\frac{u_{x_1 x_2}}{u_{x_1 x_1}}.
\eeq
The partial Legendre transform has been used widely in the study of the Monge-Amp\`ere equation \cite{DS, F, GP, LS, Li}. Here we apply it to equation \eqref{4-eq-h}.

In order to derive the equation under the partial Legendre transform, we consider the associated functionals
of \eqref{4-eq-h}
$$A_\theta(u)=
\begin{cases}
\ \int_\Omega [\det D^2u]^{\theta}\,dx,& \theta> 0, \ \ \theta\neq 1,\\[4pt]
\ \int_\Omega\log\det D^2u\, dx, & \theta=0,\\[4pt]
\ \int_\Omega\det D^2u\log\det D^2u\, dx, & \theta=1.
\end{cases}$$
The case of $\theta=0$ is essentially included in \cite{LZ}.
\begin{prop}\label{new-eq}
Let $u$ be a uniformly convex solution to \eqref{4-eq-h} in $\Omega$. Then in $\Omega^\star=\mathcal P(\Omega)$, its partial Legendre transform $u^\star$ satisfies
\beq\label{eq-new1}
w^\star w^\star_{\xi\xi}+w^\star_{\eta\eta}+(\theta-1){w^\star_\xi}^2+\frac{\theta-2}{w^\star}{w^\star_\eta}^2=0,
\eeq
Here $w^\star=-\frac{u^\star_{\eta\eta}}{u^\star_{\xi\xi}}$.
\end{prop}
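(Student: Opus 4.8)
The plan is to compute, under the partial Legendre transform, how the two operators appearing in \eqref{4-eq-h} — the cofactor matrix contraction $U^{ij}(\cdot)_{ij}$ and the function $w$ from \eqref{w-d} — transform, and to match them against a variational identity. Concretely, I would proceed as follows. First, using \eqref{rel-p} I would express $\det D^2u$ and all second derivatives of $u$ in terms of the derivatives of $u^\star$: from $u^\star_{\xi\xi}=1/u_{x_1x_1}$ and $u^\star_{\eta\eta}=-\det D^2u/u_{x_1x_1}$ one gets $\det D^2u = -u^\star_{\eta\eta}/u^\star_{\xi\xi} = w^\star$, which is exactly the claimed identification of $w^\star$ with the determinant; note this also shows $w^\star>0$ by uniform convexity. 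Hence $w = (w^\star)^{-(1-\theta)}$ (or $\log w^\star$ when $\theta=1$) as a function on $\Omega$, and the task is to rewrite $U^{ij}w_{ij}=0$ as a PDE in the $y=(\xi,\eta)$ variables for $w^\star$.

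The cleanest route is the variational one, since the associated functionals $A_\theta$ are already recorded just before the statement. The key observation is that the partial Legendre transform is measure-preserving in the following sense: the Jacobian of $\mathcal P$ is $\det (\partial y/\partial x) = u_{x_1x_1} = 1/u^\star_{\xi\xi}$, so $dx = u^\star_{\xi\xi}\, dy$. Therefore $A_\theta(u) = \int_{\Omega^\star} F_\theta(w^\star)\, u^\star_{\xi\xi}\, dy$ for the appropriate elementary function $F_\theta$ (namely $F_\theta(t)=t^\theta$, $\log t$, or $t\log t$), where I still need to express $w^\star=-u^\star_{\eta\eta}/u^\star_{\xi\xi}$ through $u^\star$. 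So the whole functional becomes a first-order (in the unknown $u^\star$, second-order in its derivatives) integral in $\Omega^\star$, and I would then observe that the Euler--Lagrange equation of $A_\theta(u)$ in $u$ is precisely \eqref{4-eq-h} (this is standard: the first variation of $\int (\det D^2 u)^\theta$ produces $U^{ij}w_{ij}$ up to a constant factor $\theta$, and similarly for the logarithmic cases). Since the change of variables $u\mapsto u^\star$ is a bijection on uniformly convex functions and respects the functional, critical points correspond to critical points, so $u^\star$ satisfies the Euler--Lagrange equation of the transformed functional $\int_{\Omega^\star} F_\theta(-u^\star_{\eta\eta}/u^\star_{\xi\xi})\, u^\star_{\xi\xi}\, dy$. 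Computing that Euler--Lagrange equation and simplifying should give \eqref{eq-new1}.

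An alternative, more pedestrian but equally valid route, is a direct change of variables in the PDE: expand $U^{ij}w_{ij}$ using $w=(w^\star)^{-(1-\theta)}\circ\mathcal P$, convert each $\partial_{x_i}$ into a combination of $\partial_\xi,\partial_\eta$ via the matrix $\partial(x_1,x_2)/\partial(\xi,\eta)$ recorded above, substitute $U^{11}=u_{x_2x_2}$, $U^{22}=u_{x_1x_1}$, $U^{12}=-u_{x_1x_2}$ expressed through \eqref{rel-p}, and collect terms. This is bookkeeping-heavy but mechanical; the derivatives of $w^\star$ enter only up to second order because $w^\star$ itself is built from second derivatives of $u^\star$ and the equation is fourth order in $u$. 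The factor $w^\star$ in front of $w^\star_{\xi\xi}$, the absence of such a factor on $w^\star_{\eta\eta}$, and the asymmetric lower-order terms $(\theta-1)(w^\star_\xi)^2$ and $\tfrac{\theta-2}{w^\star}(w^\star_\eta)^2$ all come from the asymmetry between the $\xi$-direction (the Legendre-transformed variable, where $u^\star_{\xi\xi}=1/u_{x_1x_1}$) and the $\eta$-direction (the untouched variable). The main obstacle I anticipate is purely computational: keeping the chain-rule substitutions organized and not dropping terms when differentiating the composite $w = (w^\star)^{-(1-\theta)}$ twice, together with correctly handling the $\theta=1$ logarithmic case as a limit or separately. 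I would do the variational computation to fix the structure and coefficients, then sanity-check against the $\theta=0$ case already treated in \cite{LZ}.
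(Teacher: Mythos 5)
Your main route---rewrite $A_\theta(u)=\int_\Omega F_\theta(\det D^2u)\,dx$ via $\det D^2u=-u^\star_{\eta\eta}/u^\star_{\xi\xi}=w^\star$ and $dx=u^\star_{\xi\xi}\,d\xi\,d\eta$, then take the Euler--Lagrange equation of the transformed functional $A_\theta^\star(u^\star)=\int_{\Omega^\star}F_\theta(w^\star)\,u^\star_{\xi\xi}\,d\xi\,d\eta$---is exactly the paper's proof, including the separate treatment of the three cases $\theta\in(0,1)$, $\theta=0$, $\theta=1$. The proposal is correct and takes essentially the same approach.
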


\begin{proof}
As
$$\det D^2u=-\frac{u^\star_{\eta\eta}}{u^\star_{\xi\xi}},\ \ dxdy=u^\star_{\xi\xi}\,d\xi d\eta,$$
we have
\begin{eqnarray*}
A_\theta(u)
&=&\int_{\Omega^\star}\left(-\frac{u^\star_{\eta\eta}}{u^\star_{\xi\xi}}\right)^{\theta} u^\star_{\xi\xi}\,d\xi d\eta\\
&=&\int_{\Omega^\star}(-u^\star_{\eta\eta})^{\theta} {u^\star_{\xi\xi}}^{1-\theta}\,d\xi d\eta=:A_\theta^\star(u^\star), \ \theta\in (0,1);\\[4pt]
A_0(u)
&=&\int_{\Omega^\star}\log\left(-\frac{u^\star_{\eta\eta}}{u^\star_{\xi\xi}}\right) u^\star_{\xi\xi}\,d\xi d\eta=:A_0^\star(u^\star);\\[4pt]
A_1(u)
&=&\int_{\Omega^\star}\left(-\frac{u^\star_{\eta\eta}}{u^\star_{\xi\xi}}\right)\log\left(-\frac{u^\star_{\eta\eta}}{u^\star_{\xi\xi}}\right) u^\star_{\xi\xi}\,d\xi d\eta=:A_1^\star(u^\star).
\end{eqnarray*}
Since $u$ is maximal with respect to the functional $A_{\theta}$, $u^\star$ is maximal with respect to the functional $A_{\theta}^\star$.
It suffices to derive the Euler-Lagrange equation of $A_\theta^\star$.  See \cite{TW1, Z1} for the case of the Legendre transform.

First, we consider $\theta\in(0,1)$.
For $\varphi\in C^\infty_0(\Omega^\star)$, by integration by parts,
\begin{eqnarray}
\left.\frac{d A_\theta^\star(u^\star+t\varphi)}{dt}\right|_{t=0}&=&\int_{\Omega^\star}[-\theta(-u^\star_{\eta\eta})^{\theta-1} {u^\star_{\xi\xi}}^{1-\theta}\varphi_{\eta\eta}+(1-\theta)(-u^\star_{\eta\eta})^{\theta} (u^\star_{\xi\xi})^{-\theta}\varphi_{\xi\xi}]\,d\xi d\eta\nonumber\\[3pt]
&=&\int_{\Omega^\star}\{-[\theta(-u^\star_{\eta\eta})^{\theta-1} {u^\star_{\xi\xi}}^{1-\theta}]_{\eta\eta}+[(1-\theta)(-u^\star_{\eta\eta})^{\theta} (u^\star_{\xi\xi})^{-\theta}]_{\xi\xi}\}\varphi\,d\xi d\eta.
\end{eqnarray}
Denote $w^\star=-\frac{u^\star_{\eta\eta}}{u^\star_{\xi\xi}}$.
Then the equation, after the transformation, becomes
$$-\theta ({w^\star}^{\theta-1})_{\eta\eta}+(1-\theta) ({w^\star}^{\theta})_{\xi\xi}=0.$$
After simplification, this is \eqref{eq-new1}. Similarly,
for  $\varphi\in C^\infty_0(\Omega^*)$,
\begin{eqnarray*}
\left.\frac{d A_0^\star(u^\star+t\varphi)}{dt}\right|_{t=0}&=&\int_{\Omega^\star}-\frac{u^\star_{\xi\xi}}{u^\star_{\eta\eta}} \left(-\frac{\varphi_{\eta\eta}u^\star_{\xi\xi}-u^\star_{\eta\eta}\varphi_{\xi\xi}}{{u^\star_{\xi\xi}}^2}\right)u^\star_{\xi\xi}+\log\left(-\frac{u^\star_{\eta\eta}}{u^\star_{\xi\xi}}\right) \varphi_{\xi\xi}\,d\xi d\eta\\[3pt]
&=&\int_{\Omega^\star} \frac{\varphi_{\eta\eta}u^\star_{\xi\xi}-u^\star_{\eta\eta}\varphi_{\xi\xi}}{u^\star_{\eta\eta}}+\log\left(-\frac{u^\star_{\eta\eta}}{u^\star_{\xi\xi}}\right) \varphi_{\xi\xi}\,d\xi d\eta,
\end{eqnarray*}
and the equation, after the transformation, becomes
$$-[(w^\star)^{-1}]_{\eta\eta}+ (\log w^\star)_{\xi\xi}=0.$$
After simplification, we obtain \eqref{eq-new1}.
Finally,
\begin{eqnarray*}
\left.\frac{d A_1^\star(u^\star+t\varphi)}{dt}\right|_{t=0}&=&\int_{\Omega^\star}(1+\log w^\star) \left(-\frac{\varphi_{\eta\eta}u^\star_{\xi\xi}-u^\star_{\eta\eta}\varphi_{\xi\xi}}{{u^\star_{\xi\xi}}^2}\right)u^\star_{\xi\xi}+w^\star\log w^\star \varphi_{\xi\xi}\,d\xi d\eta\\[3pt]
&=&-\int_{\Omega^\star} (1+\log w^\star)(w^\star\varphi_{\xi\xi}-\varphi_{\eta\eta})+w^\star\log w^\star \varphi_{\xi\xi}\,d\xi d\eta.
\end{eqnarray*}
Then the equation after transformation is
$$-w^\star_{\xi\xi}-(\log w^\star)_{\eta\eta}=0,$$
which is equivalent to \eqref{eq-new1}.
\end{proof}

\subsection{The interior gradient estimate of \eqref{eq-new1}}\label{grades}

For simplicity, we change notations in this section and write \eqref{eq-new1} as
\beq\label{eq-new3}
u u_{xx}+u_{yy}=(1-\theta)u_x^2+\frac{2-\theta}{u}u_y^2.
\eeq
It is easy to see that this is a quasi-linear equation with the right-hand side depending on the gradient.
We prove the following interior gradient estimate.
\begin{theo}\label{grad}
Assume $u$ is a solution to \eqref{eq-new3} with  $\theta\in [0,1]$ on $B_R:=B_R(0)$ and satisfies $0<\lambda\leq u\leq \Lambda$.
Then there exist $\alpha, C>0$ depending on $\lambda$, $\Lambda$, $R$ and $\theta$, such that
\beq\label{f0}
\int_{B_R}|\nabla u|^3(R^2-x^2-y^2)^{\alpha}\,dV\leq C.
\eeq
\end{theo}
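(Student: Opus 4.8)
The plan is to run a Moser-type integral iteration directly on the quasi-linear equation \eqref{eq-new3}, exploiting that the coefficient $u$ of $u_{xx}$ is pinched between $\lambda$ and $\Lambda$, so the principal part $u\,u_{xx}+u_{yy}$ is uniformly elliptic. The obstruction to a clean energy estimate is the right-hand side $(1-\theta)u_x^2 + \frac{2-\theta}{u}u_y^2$, which is quadratic in the gradient; this is exactly the natural-growth nonlinearity for which one expects an interior \emph{a priori} gradient bound under an $L^\infty$ bound on $u$. Since $0<\lambda\le u\le\Lambda$ and $\theta\in[0,1]$, the coefficients $1-\theta$ and $\frac{2-\theta}{u}$ are themselves bounded, and the sign structure ($2-\theta>0$) is favorable.

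The key steps, in order: (1) Test the equation against $\phi^2 e^{Ku}\,v$ for suitable powers $v$ of $u_x,u_y$ (or more simply against $\phi^2 (v-\lambda)$ after an exponential substitution $\tilde u = e^{Ku}$ that absorbs the quadratic gradient term on the right — the standard Bernstein/Moser trick for natural-growth right-hand sides), where $\phi$ is a cutoff supported in $B_R$. The exponential weight $e^{Ku}$ with $K$ large (depending on $\lambda,\Lambda,\theta$) converts the bad quadratic term into a term with a good sign, at the cost of harmless lower-order terms; this gives a Caccioppoli inequality $\int \phi^2|\nabla u|^2|\nabla(\text{something})|^2 \lesssim \int |\nabla\phi|^2(\cdots)$. (2) Differentiate the equation: letting $p=u_x$ and $q=u_y$, the derivatives $p,q$ satisfy linear equations of the form $\mathcal L(p)= (\text{quadratic in }\nabla u)_x$, etc., with $\mathcal L = u\partial_{xx}+\partial_{yy}+\text{(first order)}$; one then runs De Giorgi–Nash–Moser on these linear equations with the $L^\infty$ bound on $u$ providing control on the drift coefficients. (3) Iterate (Moser) or apply a single Sobolev step in two dimensions to upgrade an $L^2$-type energy bound on $\nabla u$ to the $L^3$ bound with the degenerate polynomial weight $(R^2-x^2-y^2)^\alpha$ — the weight absorbing the boundary blow-up from the cutoff $\phi = (R^2-x^2-y^2)^{\alpha/3}$ (or a power thereof).

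The main obstacle I expect is handling the quadratic gradient term on the right-hand side while simultaneously keeping the estimate \emph{interior} (no boundary data on $\partial B_R$): the exponential-substitution trick controls the natural growth, but one must be careful that the constants produced depend only on $\lambda,\Lambda,R,\theta$ and not on any gradient bound one is trying to prove — i.e., one needs a genuinely \emph{a priori} argument, not a bootstrap from assumed smoothness. Concretely, the cutoff and the weight must be chosen compatibly so that, after integration by parts, every term involving $\nabla\phi$ times a gradient power is reabsorbed (Young's inequality) into the good left-hand side, and the leftover is a pure integral of $|\nabla\phi|^p$ against bounded quantities. The choice of the exponent $\alpha$ and the precise test function are the delicate bookkeeping; the structural heavy lifting — uniform ellipticity of the principal part plus favorable signs of the zeroth/first order terms after the $e^{Ku}$ twist — is what makes the argument go through without invoking Caffarelli–Guti\'errez.
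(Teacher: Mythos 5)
Your high-level instinct — a Bernstein-type multiplier $\phi(u)$ to absorb the quadratic gradient nonlinearity, combined with an interior weight vanishing on $\partial B_R$ — is the right one, and it is indeed what drives the paper's proof. But the argument you sketch would not close. The decisive gap is step (2): differentiating \eqref{eq-new3} gives, for $p=u_x$, a linear equation whose drift coefficient is linear in $\nabla u$ and whose zeroth-order coefficient is quadratic in $\nabla u$ — precisely the quantities one is trying to bound. The $L^\infty$ bound on $u$ gives uniform ellipticity of the principal part, but it says nothing about these lower-order coefficients; running De Giorgi--Nash--Moser on the equations for $p,q$ would require a priori $L^{n+\varepsilon}$ control of $\nabla u$, which is circular. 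Step (3) is also unsupported as written: an $L^2$ energy bound on $\nabla u$ does not upgrade to $\nabla u\in L^3_{\mathrm{loc}}$ by Sobolev embedding alone (that would need $D^2u\in L^{6/5}$), and your step (1) as described produces no such control.

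The paper sidesteps both issues with a single integral identity, with no iteration, no Sobolev embedding, and no appeal to De Giorgi--Nash--Moser. Set $w=v\phi(u)\eta$ with $v=\sqrt{|\nabla u|^2+1}$ and $\eta=(R^2-x^2-y^2)^\alpha$, $\alpha>3$, and compute $\int_{B_R}(u\,w_{xx}+w_{yy})\,dV$ in two ways: integrations by parts give exactly $\int u_{xx}v\phi\eta\,dV$, while expanding $w$ by the product rule and substituting the differentiated equations \eqref{f3}--\eqref{f4} into $uv_{xx}+v_{yy}$ gives the same $\int u_{xx}v\phi\eta\,dV$ plus terms $\int(\cdots)u_x^2v\eta\,dV$ and $\int(\cdots)u_y^2v\eta\,dV$, minus error integrals carrying the lower weights $(R^2-x^2-y^2)^{\alpha-1}$ and $(R^2-x^2-y^2)^{\alpha-2}$. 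The leading term cancels on subtraction. The multiplier is then chosen as $\phi(u)=Au^{\theta-2}-u/(2\theta^2-9\theta+9)$, by solving an explicit ODE so that both surviving coefficients are positive and bounded below — the simple exponential $e^{\pm Ku}$ does \emph{not} in general satisfy both required sign conditions once $\Lambda/\lambda$ is large, so the precise form of $\phi$ matters. Finally, because $\alpha>3$, a single application of H\"older's inequality bounds the error integrals by the powers $(\int|\nabla u|^3\eta)^{2/3}$ and $(\int|\nabla u|^3\eta)^{1/3}$, and the resulting self-improving inequality closes at once.
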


\begin{proof}
Let $w=v\phi \eta$, where $$v=\sqrt{u_x^2+u_y^2+1},  \  \eta=(R^2-x^2-y^2)^\alpha,\ \alpha>3$$
 and $\phi=\phi(u)$ is a positive function of $u$ to be determined later.

A direct calculation yields
\begin{eqnarray}\label{f1}
uw_{xx}+w_{yy}&=& (uv_{xx}+v_{yy})\phi\eta+2[uv_x(\phi\eta)_x+v_y(\phi\eta)_y]+ [u(\phi\eta)_{xx}+(\phi\eta)_{yy}]v.
\end{eqnarray}
It is clear that
\begin{eqnarray}
v_x &=& v^{-1}(u_x u_{xx}+u_y u_{xy}),\quad
v_y = v^{-1}(u_x u_{xy}+u_y u_{yy}),\label{f2}\\
v_{xx} &=& -v^{-3}(u_x u_{xx}+u_y u_{xy})^2+v^{-1}(u_{xx}^2+u_x u_{xxx}+u_{xy}^2+u_y u_{xxy}),\label{f2-2}\\
v_{yy} &=& -v^{-3}(u_x u_{xy}+u_y u_{yy})^2+v^{-1}(u_{xy}^2+u_x u_{xyy}+u_{yy}^2+u_y u_{yyy}).\label{f2-3}
\end{eqnarray}
Differentiating the equation, we have
\begin{eqnarray}
u_xu_{xx}+uu_{xxx}+u_{xyy}&=&2(1-\theta)u_xu_{xx}-\frac{2-\theta}{u^2}u_xu_y^2+\frac{2(2-\theta)}{u}u_yu_{xy},\label{f3}\\
u_yu_{xx}+uu_{xxy}+u_{yyy}&=&2(1-\theta)u_xu_{xy}-\frac{2-\theta}{u^2}u_yu_y^2+\frac{2(2-\theta)}{u}u_yu_{yy}.\label{f4}
\end{eqnarray}
By the Cauchy inequality, we have
\beq\label{f5}
(u_x u_{xx}+u_y u_{xy})^2\leq (v^2-1)(u_{xx}^2+u_{xy}^2),\ \
(u_x u_{xy}+u_y u_{yy})^2\leq (v^2-1)(u_{xy}^2+u_{yy}^2).
\eeq
Then by \eqref{f2}, \eqref{f2-2}, \eqref{f2-3}, \eqref{f3}, \eqref{f4} and \eqref{f5},
the first term in \eqref{f1} satisfies
\begin{eqnarray*}
&&uv_{xx}+v_{yy}\\[5pt]
&=&u[v^{-1}(u^2_{xx}+u^2_{xy})-v^{-3}(u_x u_{xx}+u_y u_{xy})^2]+v^{-1}u(u_xu_{xxx}+u_yu_{xxy})\\[5pt]
&&+[v^{-1}(u^2_{xy}+u^2_{yy})-v^{-3}(u_x u_{xy}+u_y u_{yy})^2]+v^{-1}(u_xu_{xyy}+u_yu_{yyy}))\\[5pt]
&\geq& v^{-1}\left[2(1-\theta)u_{x}(u_xu_{xx}+u_yu_{xy})+\frac{2(2-\theta)}{u}u_y(u_x u_{xy}+u_y u_{yy})-(v^2-1)u_{xx}\right]\\[5pt]
&&\ -v^{-1}\frac{2-\theta}{u^2}(u_x^2u_y^2+u_y^4)+v^{-3}\left[u(u^2_{xx}+u^2_{xy})+u^2_{xy}+u^2_{yy}\right]\\[5pt]
&=&2(1-\theta)u_x v_x+\frac{2(2-\theta)}{u}u_y v_y-\frac{2-\theta}{u^2}\frac{v^2-1}{v} u_y^2-\frac{v^2-1}{v} u_{xx}\\[5pt]
&&\ +v^{-3}\left[u(u^2_{xx}+u^2_{xy})+u^2_{xy}+u^2_{yy}\right]\\[5pt]
&\geq &2(1-\theta)u_x v_x+\frac{2(2-\theta)}{u}u_y v_y-\frac{2-\theta}{u^2}\frac{v^2-1}{v} u_y^2-v u_{xx}+v^{-1}u_{xx}+v^{-3}uu_{xx}^2\\[5pt]
&\geq &2(1-\theta)u_x v_x+\frac{2(2-\theta)}{u}u_y v_y-\frac{2-\theta}{u^2}\frac{v^2-1}{v} u_y^2-v u_{xx}-\frac{v}{4u}-v^{-3}uu_{xx}^2+v^{-3}uu_{xx}^2\\[5pt]
&\geq &2(1-\theta)u_x v_x+\frac{2(2-\theta)}{u}u_y v_y-\frac{2-\theta}{u^2}\frac{v^2-1}{v} u_y^2-v u_{xx}-Cv,
\end{eqnarray*}
where $C>0$ depends on $\lambda$. By integration by parts,
\begin{eqnarray*}
&&\int_{B_R} (uv_{xx}+v_{yy})\phi\eta\,dV\\[5pt]
&\geq&
-\int_{B_R}2(1-\theta)[u_{xx} v\phi\eta+u_xv(\phi\eta)_{x}]\,dV\\[5pt]
&&-\int_{B_R}2(2-\theta)\left[\frac{u_{yy}}{u}v \phi\eta-\frac{u_{y}^2}{u^2}v \phi\eta+\frac{1}{u}u_yv(\phi\eta)_{y}\right]\,dV\\[5pt]
&&-\int_{B_R}\left(\frac{2-\theta}{u^2}\frac{v^2-1}{v} u_y^2\phi\eta+v u_{xx}\phi\eta+C\phi\eta v\right) \,dV\\[5pt]
&=&\int_{B_R}u_{xx} v\phi\eta\,dV-\int_{B_R}2(2-\theta)\frac{1}{u}\left[(1-\theta)u_x^2+\frac{2-\theta}{u}u_y^2\right]v\phi\eta\,dV\\[5pt]
&&-\int_{B_R}\left[2(1-\theta)u_xv(\phi\eta)_{x}+2(2-\theta)\left(\frac{u_yv(\phi\eta)_{y}}{u}-\frac{u_{y}^2v \phi\eta}{u^2}\right)\right]\,dV\\[5pt]&&-\int_{B_R}\left[\frac{2-\theta}{u^2}\left(v-\frac{1}{v}\right) u_y^2\phi\eta+C\phi\eta v\right]\,dV\\[5pt]
&=&\int_{B_R}u_{xx} v\phi\eta\,dV-\int_{B_R}\left[2(1-\theta)\phi'+\frac{2(2-\theta)(1-\theta)}{u}\phi\right]u_x^2v\eta\,dV\\[5pt]
&&-\int_{B_R}\left[2(2-\theta)\frac{\phi'}{u}+(2-\theta)(3-2\theta)\frac{1}{u^2}\phi\right]u_y^2v\eta\,dV\\[5pt]
&&-\int_{B_R}\left[2(1-\theta)u_xv\phi\eta_{x}+\frac{2(2-\theta)}{u}u_y v\phi\eta_{y}+C\phi\eta v\right]\,dV+\int_{B_R}\frac{2-\theta}{u^2}v^{-1} u_y^2\phi\eta\,dV\\[5pt]
&\geq&\int_{B_R}u_{xx} v\phi\eta\,dV-\int_{B_R}\left[2(1-\theta)\phi'+\frac{2(2-\theta)(1-\theta)}{u}\phi\right]u_x^2v\eta\,dV\\[5pt]
&&-\int_{B_R}\left[2(2-\theta)\frac{\phi'}{u}+(2-\theta)(3-2\theta)\frac{1}{u^2}\phi\right]u_y^2v\eta\,dV\\[5pt]
&&-C\left[\int_{B_R}|\nabla u|^2(R^2-x^2-y^2)^{\alpha-1}\,dV+\int_{B_R}|\nabla u|(R^2-x^2-y^2)^{\alpha}\,dV+1\right].
\end{eqnarray*}
Here $C>0$ depends on $\lambda$, $\Lambda$, $R$ and $\theta$.
For the third term in \eqref{f1}
\beqs
&&[u(\phi\eta)_{xx}+(\phi\eta)_{yy}]v\\[5pt]
&=&v\left[\phi''(u)(uu^2_x+u_y^2)\eta+\phi'(u)(uu_{xx}+u_{yy})\eta+2\phi'(u)(uu_x\eta_x+u_y\eta_y)
+\phi\cdot(u\eta_{xx}+\eta_{yy})\right]\\[5pt]
&=&v\bigg[\phi''(u)(uu^2_x+u_y^2)\eta+\phi'(u)\left((1-\theta)u_x^2+\frac{2-\theta}{u}u_y^2\right)\eta+2\phi'(u)(uu_x\eta_x+u_y\eta_y)
\\[5pt]&&\ +\phi\cdot(u\eta_{xx}+\eta_{yy})\bigg]\\[5pt]
&\leq& \left[(\phi''u+(1-\theta)\phi') u^2_x+\left(\phi''+\phi' \frac{2-\theta}{u}\right)u_y^2\right]v\eta\\[5pt]
&&\ +C\left[|\nabla u|^2(R^2-x^2-y^2)^{\alpha-1}+|\nabla u|(R^2-x^2-y^2)^{\alpha-2}\right].
\eeqs
Here $C>0$ depends on $\lambda$, $\Lambda$, $R$ and $\theta$.
Integrating by parts, the second term in \eqref{f1}
$$2\int_{B_R} uv_x(\phi\eta)_x+v_y(\phi\eta)_y\,dV
=-2\int_{B_R} u_x(\phi\eta)_xv\,dV-2\int_{B_R} [u(\phi\eta)_{xx}+(\phi\eta)_{yy}]v\,dV,$$
and hence
\begin{eqnarray}\label{f7}
&&\int_{B_R} uw_{xx}+w_{yy}\,dV\nonumber\\[5pt]
&=&\int_{B_R} (uv_{xx}+v_{yy})\phi\eta\,dV-2\int_{B_R} u_x(\phi\eta)_xv\,dV-\int_{B_R} [u(\phi\eta)_{xx}+(\phi\eta)_{yy}]v\,dV\nonumber\\[5pt]
&\geq&\int_{B_R}u_{xx} v\phi\eta\,dV+\int_{B_R}[(-5+3\theta)\phi'-\frac{2(2-\theta)(1-\theta)}{u}\phi-\phi''u]u_x^2v\eta\,dV\nonumber\\[5pt]
&&+\int_{B_R}[-3(2-\theta)\frac{\phi'}{u}-(2-\theta)(3-2\theta)\frac{\phi}{u^2}-\phi'']u_y^2v\eta\,dV\nonumber\\[5pt]
&&-C\left[\int_{B_R}|\nabla u|^2(R^2-x^2-y^2)^{\alpha-1}\,dV+\int_{B_R}|\nabla u|(R^2-x^2-y^2)^{\alpha-2}\,dV+1\right].
\end{eqnarray}
Note that the left hand side term satisfies
\beq\label{f8}
\int_{B_R} uw_{xx}+w_{yy}\,dV=\int_{B_R} u_{xx}v\phi\eta\,dV.
\eeq
Now we choose
$$\phi(u)=Au^{\theta-2}-\frac{u}{2\theta^2-9\theta+9}$$
with
$$A\geq\frac{\Lambda^{3-\theta}}{2\theta^2-9\theta+9}+1.$$
Then it is clear that $\phi(u)>0$. Furthermore, since $\theta\in[0,1]$, we have
$$(-5+3\theta)\phi'-\frac{2(2-\theta)(1-\theta)}{u}\phi-\phi''u=1$$
and
$$-3(2-\theta)\frac{\phi'}{u}-(2-\theta)(3-2\theta)\frac{\phi}{u^2}-\phi''=\frac{2(2-\theta)(3-\theta)}{(2\theta^2-9\theta+9)u}\geq C_0>0.$$
Combining them with \eqref{f7} and \eqref{f8},
we obtain
\begin{eqnarray*}
&&\int_{B_R}|\nabla u|^3(R^2-x^2-y^2)^{\alpha}\,dV\\[5pt]
&\leq&  C_1\int_{B_R}|\nabla u|^2(R^2-x^2-y^2)^{\alpha-1}\,dV+C_2\int_{B_R}|\nabla u|(R^2-x^2-y^2)^{\alpha-2}\,dV+C_3\\[5pt]
&\leq & C_1'\left(\int_{B_R}|\nabla u|^3(R^2-x^2-y^2)^{\alpha}\,dV\right)^{\frac{2}{3}}+
C_2'\left(\int_{B_R}|\nabla u|^3(R^2-x^2-y^2)^{\alpha}\,dV\right)^{\frac{1}{3}}+C_3.
\end{eqnarray*}
Hence, \eqref{f0} follows.
\end{proof}

\subsection{Proof of Theorem \ref{int-est}}\label{maindim2}

In order to use the partial Legendre transform, we first recall the modulus of convexity.
For a convex function on $\mathbb R^n$, the {\it modulus of convexity} $m_u$ of $u$ is defined by
\beq\label{mdc}
m_u(t)=\inf\{u(x)-\ell_z(x):|x-z|>t\},
\eeq
where $t>0$ and $\ell_z$ is the supporting function of $u$ at $z$. For a strictly convex function, $m_u$ must be a positive function. A result of Heinz \cite{H} implies that in two dimensions, when $\det D^2u\geq \lambda>0$, there exists a positive function $C(t)>0$ depending on $\lambda$ such that $m_u(t)\geq C(t)>0$. Now for the partial Legendre transform \eqref{p-leg}, we consider the mapping
\beq
(\xi, \eta)=\mathcal P(x, y)=(u_x,y): B_R(0)\to \mathbb R^2.
\eeq
The following important property is revealed in \cite{Li}.
\begin{lem}[{\cite[Lemma 2.1]{Li}}]\label{mod}
There exists a constant $\delta>0$ depending on the modulus of convexity $m_u$ defined in \eqref{mdc}, such that $B_\delta(0)\subset \mathcal P(B_R(0))$.
\end{lem}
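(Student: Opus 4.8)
The plan is to reduce this two–dimensional statement to a one–parameter family of one–dimensional problems on the horizontal slices $\{y=\eta\}$, on which $\mathcal P$ acts simply by $x\mapsto u_x(x,\eta)$, and to extract from the modulus of convexity a uniform lower bound for $|u_x|$ on the relevant part of each slice.

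Since $m_u$ is unchanged under subtracting an affine function, and such a subtraction only translates $\mathcal P$ by a constant vector in the $\xi$–direction (hence moves the ball we seek by the same amount), we may normalize so that $u(0,0)=0$ and $Du(0,0)=0$; then $u\ge 0$ on $B_R(0)$ and $\mathcal P(0,0)=(0,0)$. I would note in passing that $\mathcal P$ has positive Jacobian $u_{xx}$ and is injective (it preserves $y$, and $x\mapsto u_x(x,\eta)$ is strictly increasing for each fixed $\eta$), so $\mathcal P(B_R(0))$ is open and already contains \emph{some} ball about the origin; the real content of the lemma is the quantitative dependence of its radius on $m_u$. Since $(\xi,\eta)\in\mathcal P(B_R(0))$ exactly when $\xi$ lies in the range of the continuous, strictly increasing map $x\mapsto u_x(x,\eta)$ on $I_\eta:=\{x:\ x^2+\eta^2<R^2\}$, it suffices to show that for all sufficiently small $|\eta|$ this range contains a fixed interval $(-\delta,\delta)$.

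Set $\mu:=m_u(R/4)>0$. For $|\eta|<\tfrac{\sqrt3}{2}R$ the points $(\pm R/2,\eta)$ lie in $B_R(0)$ at distance $\ge R/2>R/4$ from the origin, so applying the definition of $m_u$ with base point $z=0$ (at which the supporting function vanishes by the normalization) gives $u(\pm R/2,\eta)\ge\mu$. Since $u$ is continuous with $u(0,0)=0$, there is $\delta_0>0$ with $u(0,\eta)\le\mu/2$ whenever $|\eta|\le\delta_0$. Then, for such $\eta$, the elementary convexity inequality for $x\mapsto u(x,\eta)$ (comparing value and slope at $\pm R/2$ and at $0$) gives
\[
u_x(R/2,\eta)\ \ge\ \frac{u(R/2,\eta)-u(0,\eta)}{R/2}\ \ge\ \frac{\mu}{R},\qquad
u_x(-R/2,\eta)\ \le\ -\frac{\mu}{R}.
\]
Since $x\mapsto u_x(x,\eta)$ is continuous on $I_\eta\supset[-R/2,R/2]$, the intermediate value theorem shows its range contains $[-\mu/R,\mu/R]$; hence every $(\xi,\eta)$ with $|\eta|\le\delta_0$ and $|\xi|\le\mu/R$ lies in $\mathcal P(B_R(0))$, and $\delta:=\min\{\delta_0,\mu/R\}$ works.

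The step I expect to be the main obstacle is making the choice of $\delta_0$ quantitative: the modulus of convexity only provides lower bounds for $u$ away from a point and gives no a priori upper bound near it, so some extra input on $u$ must be used to control $u(0,\eta)$ from above. This is harmless in every application — since a convex function has gradient controlled by its oscillation, $|u(0,\eta)|\le C R^{-1}\big(\sup_{B_{3R/4}(0)}|u|\big)|\eta|$ already suffices, and a Hessian bound at the origin yields the essentially sharp dependence $\delta_0\sim\sqrt\mu$ — but it is the only place where anything beyond convexity and the slicewise structure of $\mathcal P$ is needed.
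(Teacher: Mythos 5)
The paper cites \cite{Li} for Lemma~\ref{mod} without reproducing the proof, so there is no in-paper argument to compare against line by line; your elementary slicewise route is a natural and correct one. Normalizing $u$ and $Du$ to vanish at the origin, reducing $\mathcal P$ to the one-dimensional maps $x\mapsto u_x(x,\eta)$, using the modulus of convexity with base point $0$ (whose supporting plane is $\ell_0\equiv 0$) to obtain $u(\pm R/2,\eta)\ge\mu:=m_u(R/4)$, and then combining convexity in $x$ with the intermediate value theorem to capture a fixed interval of slopes on each slice all check out.

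The point you raise at the end is the one genuine subtlety, and you are right to flag it: $m_u$ provides lower bounds on $u$ away from a supporting point but no upper bound on $u(0,\eta)$, so your $\delta_0$ necessarily depends on an oscillation or gradient bound of $u$, not on $m_u$ alone. This is harmless in context, and in fact matches precisely how the paper invokes the lemma: in the proof of Theorem~\ref{int-est}, the bound $\sup_{B_R(p)}|Du|\le C$ (depending on $\sup_\Omega|u|$) is recorded immediately before Lemma~\ref{mod} is used, and the final constant in Theorem~\ref{int-est} is permitted to depend on $\sup_\Omega|u|$ in any case. So the stated dependence ``on the modulus of convexity $m_u$'' should be read as also permitting dependence on these auxiliary bounds, which are always available in the intended applications. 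With that dependence made explicit in the conclusion, your proof is complete and the constant $\delta$ can be taken, for instance, as $\delta=\min\left\{\mu/R,\ \tfrac{\mu R}{2C\sup_\Omega|u|}\right\}$ with $\mu=m_u(R/4)$ and $C$ the Lipschitz constant of $u$ on $B_{3R/4}$.
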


\begin{proof}[Proof of Theorem \ref{int-est}]
For any $p\in \Omega$, let $R=\frac{dist(p,\partial\Omega)}{2}$. Without loss of generality, we assume $\mathcal P(p)=0$.  It is clear that $\sup_{B_R(p)}|Du|\leq C$ for some constant $C$ depending on $R$ and $\sup_\Omega |u|$.
By Lemma \ref{mod}, there exists $\delta>0$ such that $B_\delta(0)\subset \mathcal P(B_R(p))$.
According to Proposition \ref{new-eq}, $u^\star$ satisfies \eqref{eq-new1} in $B_\delta(0)$ with $$0<\lambda\leq w^\star=-\frac{u^\star_{\eta\eta}}{u^\star_{\xi\xi}}\leq \Lambda.$$
By Theorem \ref{grad}, $$\| w^\star\|_{W^{1, 3}(B_{\frac{7\delta}{8}}(0))}\leq C.$$
Note that $n=2$.
By the Sobolev theorem, we have the $C^\alpha$ estimate of $w^\star$. And by the interior $W^{2,p}$-estimate of the uniformly elliptic equation \eqref{eq-new1}, we have 
the estimate $$\| w^\star\|_{W^{2, \frac{3}{2}}(B_{\frac{3\delta}{4}}(0))}\leq C,$$ which implies
the $W^{1, 6}$-estimate of $w^\star$. Again by the interior $W^{2,p}$-estimate of the uniformly elliptic equation \eqref{eq-new1}, we have
the $$\| w^\star\|_{W^{2, 3}(B_{\frac{\delta}{2}}(0))}\leq C,$$ which implies the
 $C^{1,\alpha}$ estimate of $w^\star$. Then by the Schauder estimate of \eqref{eq-new1}, we have
 $$\|w^\star\|_{C^{2,\alpha}(B_{\frac{\delta}{4}}(0))}\leq C$$ and all the higher order estimates of $u^\star$.
Transforming back by the partial Legendre transform, we obtain the lower bound of $u_{x_1x_1}$ by \eqref{rel-p}. Since we can do partial Legendre transforms of $u$ in any direction, we can obtain the lower bound  for the smallest eigenvalue of $D^2u$, which implies the boundedness of $D^2u$ by \eqref{detcond}. Then we have all the higher order estimates of $u$.
\end{proof}

\vskip 20pt

\section{The inhomogeneous equations in dimension two}\label{inh-p}
In this section, we will study the interior estimate for the inhomogeneous equation \eqref{4-eq-inh}.

We first recall the regularity theory of the second order elliptic equation in divergence form
\beq\label{de-eq}
D_j(a_{ij}(x)D_i u)=D_ig^i + h\quad\text{in}\,\,\Omega,
\eeq
where $g=(g^1, \cdots, g^n)$ is a vector valued function and $\{a_{ij}(x)\}$ satisfies
$$0< \lambda(x)|\xi|^2\leq a_{ij}(x)\xi_i\xi_j\leq \Lambda(x)|\xi|^2,\ \ \forall\ \xi\in\mathbb R^n\setminus\{0\}$$
and can be discontinuous. When $\{a_{ij}(x)\}$ is uniformly elliptic,  a fundamental regularity theory
was established by  De Giorgi, Nash, Moser, etc.
In \cite{MS,Tr}, the classical De Giorgi-Nash-Moser's theory was extended to degenerate linear elliptic equation with $\lambda(x)^{-1}\in L^p(\Omega)$ and $\Lambda(x)\in L^q(\Omega)$ where $\frac{1}{p}+\frac{1}{q}<\frac{2}{n}$. We denote by $S_n^+$ the set of $n\times n$ nonnegative definite matrices.

\begin{theo}[{\cite[Theorem 4.2]{Tr}}]\label{thm4.2}
Let $\{a_{ij}\}:\Omega\to S_n^+$ be such that $\lambda^{-1}\in L_{loc}^p(\Omega)$ for some $p>n$. Let $g\in L^{\infty}(\Omega;\mathbb{R}^n)$ and $h\in L^q(\Omega)$. Assume $p$ and $q$ satisfy $\frac{1}{q}+\frac{1}{p}<\frac{2}{n}$. Suppose that $u$ is a subsolution (supersolution)  to \eqref{de-eq} in $B_R=B_R(y)\subset\Omega$  and that $u\leq 0(u\geq 0)$ on $\partial B_R$. Then
$$\sup_{B_R}u(-u)\leq C\|\lambda^{-1}\|_{L^p(B_R)}\left(\|g\|_{L^\infty(B_R)}R^{1-\frac{n}{p}}+\|h\|_{L^q(B_R)}R^{2-\frac{n}{q}-\frac{n}{p}}\right).$$
\end{theo}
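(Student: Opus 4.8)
This is the degenerate De Giorgi--Nash--Moser global maximum principle, and the plan is to prove it by the De Giorgi--Stampacchia level-set iteration adapted to the weighted setting. The structural point that makes the argument run with no hypothesis on the \emph{size} of $\{a_{ij}\}$ is that the coefficients enter only through the coercive quadratic term on the left of the energy inequality, so only the lower bound $a_{ij}\xi_i\xi_j\ge\lambda(x)|\xi|^2$ together with $\lambda^{-1}\in L^p$ is ever used. Replacing $(u,g,h)$ by $(-u,-g,-h)$ reduces the supersolution case to the subsolution case, so assume $u$ is a subsolution with $u\le 0$ on $\partial B_R$.

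First I would establish the basic energy inequality. For $k\ge 0$ the truncation $(u-k)_+$ vanishes on $\partial B_R$, so after a standard density argument it is an admissible test function in the weighted Sobolev space where the weak formulation is posed; inserting it, with $A_k=\{u>k\}\cap B_R$, gives
\[
\int_{A_k}\lambda|Du|^2\;\le\;\int_{A_k}a_{ij}D_iu\,D_ju\;\le\;\|g\|_{L^\infty(B_R)}\int_{A_k}|Du|\;+\;\|h\|_{L^q(B_R)}\big\|(u-k)_+\big\|_{L^{q'}(A_k)},
\]
$q'=q/(q-1)$. Next I would trade the weighted left-hand side for unweighted gradient integrals: Hölder against $\lambda^{-1}\in L^p$ gives $\int_{A_k}|Du|^s\le\big(\int_{A_k}\lambda|Du|^2\big)^{s/2}\|\lambda^{-1}\|_{L^p}^{s/2}|A_k|^{\frac{2-s}{2}-\frac{s}{2p}}$ for $1\le s<\frac{2p}{p+1}$; since $p>n$ one may fix $s$ with $\frac{2n}{n+1}<s<\frac{2p}{p+1}$ and use the Sobolev embedding $W^{1,s}_0(B_R)\hookrightarrow L^{s^*}$, $s^*=\frac{ns}{n-s}$, noting that $s^*>q'$ precisely when $\frac1p+\frac1q<\frac2n$. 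Substituting back, $\int_{A_k}\lambda|Du|^2$ occurs on the right only to the power $\tfrac12$, so one solves for it and arrives at
\[
\int_{A_k}|Du|\;\le\;C\,\|\lambda^{-1}\|_{L^p(B_R)}\Big(\|g\|_{L^\infty(B_R)}|A_k|^{\mu_1}+\|h\|_{L^q(B_R)}|A_k|^{\mu_2}\Big),\qquad \mu_1=1-\tfrac1p,\ \ \mu_2=1-\tfrac1p-\tfrac1q+\tfrac1n .
\]

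Then I would run the Stampacchia iteration. For $0\le k<k'$ the $W^{1,1}$-Sobolev inequality gives $(k'-k)|A_{k'}|\le\int_{A_k}(u-k)_+\le C|A_k|^{1/n}\int_{A_k}|Du|$, hence
\[
|A_{k'}|\;\le\;\frac{C\,\|\lambda^{-1}\|_{L^p(B_R)}}{k'-k}\Big(\|g\|_{L^\infty(B_R)}|A_k|^{\mu_1+\frac1n}+\|h\|_{L^q(B_R)}|A_k|^{\mu_2+\frac1n}\Big),\qquad 0\le k<k'.
\]
Now $\mu_1+\tfrac1n>1\iff p>n$ while $\mu_2+\tfrac1n>1\iff\tfrac1p+\tfrac1q<\tfrac2n$, so under the hypotheses both powers exceed $1$, Stampacchia's iteration lemma applies, and it yields a level $k_0$ with $|A_{k_0}|=0$, i.e. $\sup_{B_R}u\le k_0$. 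Pushing $|A_0|\le|B_R|=\omega_nR^n$ through the lemma converts the bound on $k_0$ into $C\|\lambda^{-1}\|_{L^p(B_R)}\big(\|g\|_{L^\infty(B_R)}R^{1-\frac np}+\|h\|_{L^q(B_R)}R^{2-\frac nq-\frac np}\big)$, which is exactly the stated bound; the two source terms are handled by splitting the iteration into the ranges where each dominates.

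The real work is in the bookkeeping of the two middle steps. One must check that the single scaling assumption $\tfrac1p+\tfrac1q<\tfrac2n$ simultaneously leaves room for a Sobolev exponent $s^*\ge q'$ and drives the Stampacchia exponent $\mu_2+\tfrac1n$ strictly above $1$, and that the resulting powers of $R$ are the sharp ones; this is elementary but requires care. The other, more technical, obstacle is the functional-analytic set-up behind the first two paragraphs: showing $(u-k)_+$ is a legitimate test function and that the Sobolev inequalities hold for the weighted class requires the density and embedding theory for Sobolev spaces with weight $\lambda^{-1}\in L^p$, $p>n$ — this is precisely where $p>n$ (not merely $p\ge1$) is used. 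A Moser iteration testing with $(u-k)_+^{\beta}$ and letting the integrability exponent tend to $\infty$ is an alternative to the last two steps and meets the same bookkeeping.
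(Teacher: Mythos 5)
Your outline is correct in substance, and it tracks the classical De Giorgi--Stampacchia argument for degenerate operators, which is indeed the style of proof in Trudinger's original paper. Note, however, that the paper you are working from does \emph{not} prove this statement: it is quoted verbatim from \cite[Theorem 4.2]{Tr}, so there is no internal proof in this paper to compare against. The closest internal analogue is Lemma \ref{est} in Section \ref{maindimh}, which establishes a similar degenerate maximum principle (with a two-sided bound $\frac{d}{\lambda}\le a_{ij}\le\lambda^{n-1}$) but by Moser iteration rather than Stampacchia level sets; your proposal and the paper's own Lemma \ref{est} therefore illustrate the two standard routes, and your final remark that Moser iteration is an alternative is exactly the route the paper adopts for its own lemma.

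One small inaccuracy you should fix: the claim that ``$s^*>q'$ precisely when $\frac1p+\frac1q<\frac2n$'' is an overstatement. Taking $s$ up to $\frac{2p}{p+1}$, the embedding $W^{1,s}_0\hookrightarrow L^{q'}$ requires only $\frac{1}{q}+\frac{1}{2p}<\frac12+\frac1n$, which is strictly weaker than $\frac1p+\frac1q<\frac2n$ (for $n\ge2$ it follows from it, but is also compatible with equality $\frac1p+\frac1q=\frac2n$). The full strength of $\frac1p+\frac1q<\frac2n$ is not needed to run the Sobolev step; it enters later, exactly where you compute $\mu_2+\frac1n=1+\frac2n-\frac1p-\frac1q>1$, i.e.\ to make the Stampacchia exponent on the $h$-term strictly exceed $1$. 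Correspondingly, $p>n$ is what drives $\mu_1+\frac1n>1$ for the $g$-term. That division of labor is the right one to emphasize; attributing the scaling condition to the Sobolev step obscures it. The rest of the bookkeeping --- the energy inequality from testing with $(u-k)_+$, the Hölder trade $\int_{A_k}|Du|^s\le E^{s/2}\|\lambda^{-1}\|_{L^p}^{s/2}|A_k|^{\frac{2-s}{2}-\frac{s}{2p}}$, solving for $E^{1/2}$, the iteration lemma, and extracting the powers $R^{1-n/p}$ and $R^{2-n/q-n/p}$ from $|A_0|^{\beta_1-1}$ and $|A_0|^{\beta_2-1}$ --- checks out.
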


In \cite{Lo}, the author investigated the linearized Monge-Amp\`ere equation with right hand side $f=D_ig^i+h$,
i.e. $a_{ij}(x)=U^{ij}$ in \eqref{de-eq}.
Under the stronger assumption that $\det D^2u$ is sufficiently close to a positive constant,  he can use the $W^{2,p}$-estimate of the Monge-Amp\`ere equation and Theorem \ref{thm4.2} to obtain the interior regularity.
It was later shown in \cite{Le3} that when $n=2$ we only need the determinant to be bounded from above and below.

Now we turn to the fourth order equation
\eqref{4-eq-inh}.  By the divergence free property of $U^{ij}$, i.e.,  $\sum_iD_i U^{ij}=0$,  we can rewrite equation \eqref{4-eq-inh} in divergence form
\beq\label{div2}
D_j(U^{ij}D_iw)=D_ig^i+h.
\eeq
Note that in dimension two,  it holds $\frac{\det D^2u}{\Delta u}I\leq U^{ij}\leq \Delta u I$.
 In view of Theorem \ref{thm4.2}, it suffices to get the $L^p$-bound of $\Delta u$ for sufficiently large $p$. 
In the following, we will use an integral method directly in \eqref{4-eq-inh} to derive it.
Instead of the classical Sobolev inequality, we will need the following Monge-Amp\`ere Sobolev inequality.

\begin{lem}[{\cite[Proposition 2.6]{Le3}}]\label{Sobolev}
Assume $n=2$.  Let $u$ be a smooth, strictly convex function defined in a neighborhood of a bounded domain $\Omega\subset\mathbb R^2$. Suppose $u$ satisfies
\begin{equation}\label{detcond-1}
0<\lambda\leq\det D^2u\leq\Lambda.
\end{equation}
Then for any $\chi> 2$ there exists a constant $C>0$, depending only on $\lambda$, $\Lambda$ and $\chi$, such that
$$\left(\int_{\Omega}|v|^\chi\,dx\right)^{\frac{1}{\chi}}\leq C\left(\int_{\Omega}U^{ij}v_iv_j\,dx\right)^{\frac{1}{2}}\quad\text{for\,\,all}\,\,v\in C_0^\infty(\Omega).$$
\end{lem}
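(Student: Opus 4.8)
The plan is to obtain the inequality first for a single exponent $\chi_0>2$, by combining an elementary pointwise lower bound for $U^{ij}v_iv_j$ with the interior $W^{2,1+\varepsilon}$-estimate for the Monge--Amp\`ere equation and the two-dimensional Sobolev embedding, and then to bootstrap to every $\chi>2$ using the affine-invariant theory of sections of $u$. (After a dilation we may assume $\operatorname{diam}\Omega\le 1$; this only perturbs $\lambda,\Lambda$ by controlled factors and introduces a harmless $\operatorname{diam}\Omega$-factor in the final constant, which we suppress.)

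First I would note that, since $n=2$, the cofactor matrix obeys $U^{ij}=(\det D^2u)\,u^{ij}\ge\lambda\,u^{ij}$, where $u^{ij}=(D^2u)^{-1}_{ij}$, and $u^{ij}\xi_i\xi_j\ge|\xi|^2/\lambda_{\max}(D^2u)\ge|\xi|^2/\Delta u$; hence $U^{ij}v_iv_j\ge\lambda|\nabla v|^2/\Delta u$ a.e., so $\int_\Omega\Delta u^{-1}|\nabla v|^2\,dx\le\lambda^{-1}\int_\Omega U^{ij}v_iv_j\,dx$. Next, by the interior $W^{2,1+\varepsilon}$-estimate under \eqref{detcond-1} (\cite{DFS,S}) there is $\varepsilon>0$ and a constant depending only on $\lambda,\Lambda$ with $\|\Delta u\|_{L^{1+\varepsilon}(\mathrm{supp}\,v)}\le C$ --- this is precisely what lets the final constant avoid any dependence on $\|u\|_{C^2}$. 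Choosing $p:=\tfrac{2(1+\varepsilon)}{2+\varepsilon}\in(1,2)$, so that $\tfrac{p}{2-p}=1+\varepsilon$, and applying H\"older's inequality with exponents $\tfrac2p,\tfrac{2}{2-p}$ to $|\nabla v|^p=(|\nabla v|^2/\Delta u)^{p/2}(\Delta u)^{p/2}$ gives $\int_\Omega|\nabla v|^p\,dx\le C\big(\int_\Omega U^{ij}v_iv_j\,dx\big)^{p/2}$; since $v\in C^\infty_0(\Omega)$ and $n=2$, the Sobolev inequality $\|v\|_{L^{p^*}}\le C(p)\|\nabla v\|_{L^p}$ with $p^*=\tfrac{2p}{2-p}=2(1+\varepsilon)$ then yields the asserted bound with $\chi_0=2(1+\varepsilon)$ (and, by H\"older on $\Omega$, for all $2<\chi\le\chi_0$).

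The hard part will be reaching every $\chi>\chi_0$: the gain above is only the fixed increment allowed by $\varepsilon$, and since $\int_\Omega U^{ij}v_iv_j\,dx$ has no product structure, replacing $v$ by $|v|^\beta$ does not close an iteration. To get around this I would invoke the affine-invariant structure of \eqref{detcond-1}: the sections $S_u(x,t)=\{u<\ell_x+t\}$ are ``balanced'' convex sets on which the Monge--Amp\`ere ($=$ Lebesgue) measure is doubling, and the linearised operator $\mathcal L_u w=U^{ij}w_{ij}=D_j(U^{ij}D_iw)$ satisfies a Poincar\'e inequality $\int_S|v-\bar v_S|^2\,dx\le C\,r_S^2\int_S U^{ij}v_iv_j\,dx$ on normalised sections $S=S_u(x,r_S)$ (\cite{CG}). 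Covering $\mathrm{supp}\,v$ by a chain of such sections and running a Moser-type iteration --- the section-adapted analogue of the passage from Poincar\'e to Sobolev in the De~Giorgi--Nash--Moser theory, where in two dimensions the improvement compounds into membership in every $L^\chi$ --- should upgrade the $\chi_0$-inequality to the full range, with a ($\chi$-dependent) constant depending only on $\lambda,\Lambda,\chi$. This last step is where essentially all the work lies, and it is carried out in detail in \cite{Le3}.
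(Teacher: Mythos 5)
The paper does not prove this lemma; it is imported verbatim from Le's work (\cite[Proposition~2.6]{Le3}), with the only surrounding commentary being that it is the two-dimensional counterpart of the Tian--Wang Sobolev inequality. So there is no internal proof to compare your attempt against, and I can only assess your argument on its own merits.

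Your first step is correct and clean: the pointwise bound $U^{ij}v_iv_j\ge\lambda|\nabla v|^2/\Delta u$ (using $U^{ij}=(\det D^2u)u^{ij}$, $\det D^2u\ge\lambda$, and $\lambda_{\max}(D^2u)\le\Delta u$ in $n=2$), H\"older with the dual pair $\bigl(\tfrac2p,\tfrac2{2-p}\bigr)$ and $p=\tfrac{2(1+\varepsilon)}{2+\varepsilon}$, the interior $W^{2,1+\varepsilon}$ bound of \cite{DFS,S}, and the Sobolev embedding $W^{1,p}_0(\Omega)\hookrightarrow L^{2(1+\varepsilon)}(\Omega)$ do give the estimate for $2<\chi\le 2(1+\varepsilon)$ with a constant controlled by $\lambda,\Lambda$ and the geometry of $\Omega$ (which, as you note, is implicitly normalized in the statement). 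The gap is in the second step, which is where the whole content of the lemma lies: since $\varepsilon=\varepsilon(\lambda,\Lambda)$ is small and fixed, the argument above reaches only up to $\chi_0=2(1+\varepsilon)$, and you explicitly observe that plugging $|v|^\beta$ into the energy does not close an iteration. Your proposal to instead run a section-adapted Poincar\'e/Moser scheme is a plausible blueprint, but none of its nontrivial ingredients are supplied --- no Poincar\'e inequality on sections in the required form, no chaining or covering lemma, no verification that the iteration compounds to every $\chi<\infty$ --- and the paragraph ends by deferring the entire step to the very source being cited. As written, therefore, the proposal establishes the inequality only for a bounded range of exponents and does not prove the stated result for arbitrary $\chi>2$.
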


The above inequality is the two dimensional counterpart of the Monge-Amp\`ere Sobolev inequality in higher dimensions derived by Tian and Wang \cite{TW}.

\begin{theo}\label{W2p-est}
Assume $n=2$ and $\theta\in [0,1]$. Let $g\in L^{\infty}(\Omega;\mathbb{R}^n)$ and $h\in L^q(\Omega)$. Let $u$ be a uniformly convex smooth solution to equation \eqref{div2} in $B_R$ satisfying
\eqref{detcond-1}.
Then for any $p\geq 1$, there exists a constant $C>0$ depending on $\lambda$, $\Lambda$, $R$, $\theta$, $\|g\|_{L^\infty(B_R)}$, $\|h\|_{L^q(B_R)}$ and $p$ such that
$$\|\Delta u\|_{L^p(B_{R/2})}\leq C.$$
\end{theo}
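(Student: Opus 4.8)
The plan is a Moser-type iteration for $\Delta u$, in which the gain of integrability at each step comes from the Monge--Amp\`ere Sobolev inequality (Lemma \ref{Sobolev}) and the starting point is the interior $W^{2,1+\varepsilon}$ estimate for the Monge--Amp\`ere equation \cite{DFS,S} applied to $d:=\det D^2u$: by \eqref{detcond-1} it gives $\|\Delta u\|_{L^{1+\varepsilon_0}(B_r)}\le C$ for every $r<R$ and some $\varepsilon_0>0$. Two elementary facts are used throughout: $\Delta u=\mathrm{tr}\,U^{ij}\ge 2\sqrt\lambda$, so arbitrary powers of $\Delta u$ are harmless; and, by \eqref{w-d} and \eqref{detcond-1}, $w$ is bounded above and below while $\nabla w$ is comparable to $\nabla d$, so everything may be rephrased in terms of $d\in[\lambda,\Lambda]$ and $\nabla d$. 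I will also use the pointwise bounds $U^{ij}\xi_i\xi_j\le(\Delta u)|\xi|^2$ and $|\xi|^2\le \lambda^{-1}(\Delta u)\,U^{ij}\xi_i\xi_j$, valid in two dimensions.

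First I would record the basic energy estimate. Testing the divergence form \eqref{div2} against $\eta^2 w$ with $\eta$ a cutoff, integrating by parts and using $w\in L^\infty$, $\int_{B_r}\mathrm{tr}\,U^{ij}=\int_{B_r}\Delta u\le C$ (from $u\in W^{2,1}_{\mathrm{loc}}$), the gradient bound above, and $\|g\|_{L^\infty}$, $\|h\|_{L^q}$ with $q>1$, one absorbs the gradient term and obtains
\[
\int_{B_{3R/4}}U^{ij}w_iw_j\,dx\le C,\qquad\text{i.e.}\qquad \int_{B_{3R/4}}U^{ij}d_id_j\,dx\le C .
\]
This controls the cross terms that appear in the iteration.

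The core is a Caccioppoli (reverse) inequality for a Pogorelov-type auxiliary function, e.g.\ $b=(\Delta u)\,d^{\,a}$ with the exponent $a=a(\theta)>0$ tuned to yield the needed cancellations. Its derivation combines: (i) differentiating $\det D^2u=d$ twice together with the concavity of $\log\det$, giving the identity $U^{ij}\partial_{ij}(\Delta u)=d\,\Delta\log d+d\sum_m u^{ik}u^{jl}u_{ijm}u_{klm}$, where in two dimensions the last term is $\ge\tfrac12|\nabla d|^2/d$; and (ii) the fourth-order equation $U^{ij}w_{ij}=f$ itself, which controls $U^{ij}\partial_{ij}\log d$ (in divergence form, in terms of $g,h$ and $U^{ij}d_id_j/d^2$) and supplies, after $b$ is multiplied by $d^{\,a}$, the term that absorbs the leftover $|\nabla d|^2/d$ from (i). Taking $b^{2\beta-1}\eta^2$ as a test function in the resulting differential inequality for $b$ (legitimate since $u$ is smooth), integrating by parts and using $\partial_iU^{ij}=0$, one is left with the good term $\int U^{ij}\partial_ib^\beta\,\partial_jb^\beta\,\eta^2$ together with errors that are products of third derivatives of $u$ (from $\nabla d$ and $\nabla\Delta u$) with powers of $\Delta u$; these are handled by Cauchy--Schwarz in the $U^{ij}$-inner product together with $U^{ij}\xi_i\xi_j\le(\Delta u)|\xi|^2$ and the energy estimate above. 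The outcome is
\[
\int_{B_{r'}}U^{ij}\partial_ib^\beta\,\partial_jb^\beta\,\eta^2\,dx\ \le\ C(\beta)\Big(\int_{B_r}b^{2\beta+1}\,dx+1\Big),\qquad r'<r,\ \beta\ge\beta_*,
\]
the extra exponent $+1$ reflecting $\mathrm{tr}\,U^{ij}=\Delta u$. Then Lemma \ref{Sobolev} with an exponent $\chi>2$, applied to $b^\beta\eta$, gives $\big(\int_{B_{r'}}b^{\beta\chi}\big)^{2/\chi}\le C(\beta)\big(\int_{B_r}b^{2\beta+1}+1\big)$; choosing $\chi$ large so that $\beta\chi>2\beta+1$ for all $\beta\ge\beta_*$ and iterating over $\beta_*<\beta_1<\beta_2<\cdots$ with balls shrinking to $B_{R/2}$, starting from the base step $\|b\|_{L^{1+\varepsilon_0}(B_{3R/4})}\le C$, yields $\|\Delta u\|_{L^p(B_{R/2})}\le C$ for every $p$, with the stated dependence.

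The main obstacle is the Caccioppoli inequality of the previous paragraph: one must extract from the fourth-order equation, together with the twice-differentiated Monge--Amp\`ere equation, a differential inequality for $b$ in which the exponent $a$ can be fixed so that \emph{every} first-order error term — in particular all the $|\nabla d|^2/d$ contributions coming from the concavity of $\log\det$ and from the quadratic term $U^{ij}d_id_j/d^2$ produced by the fourth-order equation — is of good sign or can be absorbed into $\int U^{ij}\partial_ib^\beta\partial_jb^\beta\eta^2$ and $\int b^{2\beta+1}$; keeping the dependence of the constants on $\beta$ explicit is needed to make the iteration converge to a fixed (arbitrary) $p$. Since $u$ is assumed smooth it suffices to prove this as an a priori estimate, so all the integrations by parts and the admissibility of the test functions are automatic.
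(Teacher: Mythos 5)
You have correctly identified the overall architecture --- a Caccioppoli inequality coming from a Pogorelov-type computation for an auxiliary function built out of $\Delta u$ and a weight in $\det D^2u$, closed by the Monge--Amp\`ere Sobolev inequality (Lemma~\ref{Sobolev}), and started from the interior $W^{2,1+\varepsilon}$ estimate --- and this is precisely the paper's strategy. However, there are two genuine gaps. First, the lower bound you invoke for the Pogorelov term, $\sum_m u^{ik}u^{jl}u_{ijm}u_{klm}\ge\tfrac12|\nabla\log d|^2$, is not the one that is actually needed. The paper instead uses the sharper Cauchy--Schwarz bound $u^{il}u^{kj}u_{mij}u_{mkl}\ge u^{ij}v_iv_j/v$ with $v=\Delta u$, inequality \eqref{fm6}. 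This is decisive: after expanding $\int U^{ij}z_{ij}$ the good term $\int U^{ij}v_iv_j\,\varphi\rho^\alpha v^{\beta-2}$ enters with coefficient $\beta(\beta-1)$ from the direct differentiation, and the extra $+\beta$ supplied by \eqref{fm6} turns this into $\beta^2>0$ (see \eqref{fm3}). Your base step forces $\beta_*\le\varepsilon_0/2<1$, where $\beta(\beta-1)<0$, so with only the $|\nabla\log d|^2$ bound the main positive term has the wrong sign and the Caccioppoli inequality you write down does not follow.

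Second, a power weight such as $d^a$ cannot be made to work by tuning $a$. The cross term $2\beta\int U^{ij}v_iw_j\,\varphi'\rho^\alpha v^{\beta-1}$ has to be absorbed by Cauchy--Schwarz into the two good terms $\beta^2\int U^{ij}v_iv_j\,\varphi\rho^\alpha v^{\beta-2}$ and $\int U^{ij}w_iw_j\,\varphi''\rho^\alpha v^\beta$, and this requires $\varphi\varphi''/(\varphi')^2>1$. For any power $\varphi=w^a$ (equivalently a power of $d$) this ratio equals $1-1/a<1$, so the absorption fails regardless of the exponent; this is exactly why the paper takes the double exponential $\varphi(w)=e^{e^{Aw}}$, for which $\varphi\varphi''/(\varphi')^2=1+e^{-Aw}>1$. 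A smaller point: the Pogorelov identity produces the Euclidean Laplacian $\Delta\log d$, which the paper handles by rewriting in terms of $\Delta w$ and integrating by parts, not by appealing to $U^{ij}(\log d)_{ij}$ from the equation as you suggest. Finally, note that because Lemma~\ref{Sobolev} holds in two dimensions for every $\chi>2$ with constant depending only on $\lambda,\Lambda,\chi$, the paper does not iterate at all: it fixes $\beta=\varepsilon_0$ (so the right-hand side $\int v^{\beta+1}$ is controlled by the $W^{2,1+\varepsilon_0}$ estimate) and takes $\chi=2p/\varepsilon_0$ in a single application of the Sobolev inequality to reach $L^p$ for arbitrary $p$, sidestepping the small-$\beta$ difficulty and the $\beta$-dependence bookkeeping that a Moser iteration would entail.
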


\begin{proof}
We first consider the case $\theta\in [0,1)$. Denote
$$v=\Delta u,\,w=(\det D^2u)^{-(1-\theta)},\, \rho=R^2-|x|^2.$$
We consider $z=v^\beta\varphi\rho^\alpha$,
where  $\alpha, \beta>0$ are constants and $\varphi=\varphi(w)$ is a positive function to be determined.
Then
\begin{eqnarray*}
z_i&=&\beta v_iv^{\beta-1}\varphi\rho^\alpha+v^{\beta}(\varphi\rho^\alpha)_i,\\
z_{ij}&=&\beta(\beta-1)v_iv_jv^{\beta-2}\varphi\rho^\alpha+\beta v_{ij}v^{\beta-1}\varphi\rho^\alpha
+\beta v_iv^{\beta-1}(\varphi\rho^\alpha)_j+\beta v_jv^{\beta-1}(\varphi\rho^\alpha)_i+v^\beta(\varphi\rho^\alpha)_{ij}.
\end{eqnarray*}
By integration by parts with \eqref{4-eq-inh} and choosing $\alpha>2$, we have
\begin{eqnarray}
0&=&\int_{B_R} U^{ij}z_{ij}\,dx\nonumber\\[5pt]
&=&\beta(\beta-1)\int_{B_R} U^{ij}v_iv_j \varphi\rho^\alpha v^{\beta-2}\,dx+\beta\int_{B_R} U^{ij}v_{ij}\varphi \rho^{\alpha}v^{\beta-1}\,dx+ 2\beta\int_{B_R} U^{ij}v_iw_j\varphi' \rho^\alpha v^{\beta-1}\,dx
\nonumber\\[5pt]
&&+2\alpha \beta\int_{B_R} U^{ij} v_i\rho_j\varphi \rho^{\alpha-1}v^{\beta-1}\,dx+\int_{B_R} U^{ij}w_iw_j\varphi''\rho^{\alpha}v^\beta\,dx
+2\alpha\int_{B_R} U^{ij}w_i\rho_j\varphi' \rho^{\alpha-1}v^\beta\,dx\label{fm1}
\\[5pt]
&&+\int_{B_R} f\varphi'\rho^\alpha v^\beta\,dx+\alpha\int_{B_R} U^{ij}\rho_{ij}\varphi\rho^{\alpha-1}v^\beta\,dx+\alpha(\alpha-1)\int_{B_R} U^{ij}\rho_i\rho_j\varphi\rho^{\alpha-2}v^\beta\,dx.\nonumber
\end{eqnarray}
Note that for any $m$,
\beqs
u^{ij}(u_{mm})_{ij}=u^{il}u^{kj}u_{mij}u_{mkl}+(\ln\det D^2u)_{mm}
=u^{il}u^{kj}u_{mij}u_{mkl}+\frac{1}{1-\theta}\frac{w_m^2}{w^2}-\frac{1}{1-\theta}\frac{w_{mm}}{w}.
\eeqs
We have
\begin{eqnarray*}
\int_{B_R} U^{ij}v_{ij}\varphi \rho^{\alpha}v^{\beta-1}&&\,dx=\int_{B_R} \det D^2u u^{il}u^{kj}u_{mij}u_{mkl}\varphi \rho^{\alpha}v^{\beta-1}\,dx\\[5pt]
&&+\frac{1}{1-\theta}\int_{B_R} \det D^2u\frac{|\nabla w|^2}{w^2}\varphi \rho^{\alpha}v^{\beta-1}\,dx-\frac{1}{1-\theta}\int_{B_R} \det D^2u \frac{\triangle w}{w}\varphi \rho^{\alpha}v^{\beta-1}\,dx.
\end{eqnarray*}
By integration by parts, we have
\begin{eqnarray}
&&\int_{B_R} \det D^2u \frac{\triangle w}{w}\varphi \rho^{\alpha}v^{\beta-1}\,dx=\int_{B_R} w^{-\frac{2-\theta}{1-\theta}}\triangle w\varphi \rho^\alpha v^{\beta-1}\,dx\nonumber\\[5pt]
&=&\frac{2-\theta}{1-\theta}\int_{B_R} w^{-\frac{3-2\theta}{1-\theta}}|\nabla w|^2\varphi\rho^{\alpha}v^{\beta-1}\,dx-\int_{B_R} w^{-\frac{2-\theta}{1-\theta}}|\nabla w|^2\varphi' \rho^\alpha v^{\beta-1}\,dx \label{fm5}\\[5pt]
&&-\alpha\int_{B_R} w^{-\frac{2-\theta}{1-\theta}}w_i\rho_i\varphi\rho^{\alpha-1}v^{\beta-1}\,dx-(\beta-1)\int_{B_R} w^{-\frac{2-\theta}{1-\theta}}w_iv_i\varphi\rho^{\alpha}v^{\beta-2}\,dx.\nonumber
\end{eqnarray}
By changing coordinates at each point, say $x_0$, we can assume that $D^2u(x_0)$ is diagonal. Then
\begin{eqnarray}
u^{il}u^{kj}u_{mij}u_{mkl}=\frac{u_{mij}^2}{u_{ii}u_{jj}}\geq\frac{u_{mmi}^2}{u_{ii}u_{mm}}\geq \frac{v_i^2}{u_{ii}v}=\frac{u^{ij}v_iv_j}{v},\label{fm6}
\end{eqnarray}
where we used the Cauchy inequality in the second inequality.
Hence by \eqref{fm5} and \eqref{fm6}, we have
\begin{eqnarray}
\int_{B_R} U^{ij}v_{ij}\varphi \rho^{\alpha}v^{\beta-1}&&\,dx\geq \int_{B_R} U^{ij}v_iv_j\varphi\rho^\alpha v^{\beta-2}\,dx+\frac{1}{1-\theta}\int_{B_R} w^{-\frac{2-\theta}{1-\theta}}(\varphi'-\frac{\varphi w^{-1}}{1-\theta})|\nabla w|^2 \rho^\alpha v^{\beta-1}\,dx\nonumber\\[5pt]
&&+ \frac{\alpha}{1-\theta}\int_{B_R} w^{-\frac{2-\theta}{1-\theta}}w_i\rho_i\varphi\rho^{\alpha-1}v^{\beta-1}\,dx+\frac{\beta-1}{1-\theta}\int_{B_R} w^{-\frac{2-\theta}{1-\theta}}w_iv_i\varphi\rho^{\alpha}v^{\beta-2}\,dx.\label{fm2}
\end{eqnarray}
Then putting \eqref{fm2} into \eqref{fm1} yields
\begin{eqnarray}
0&\geq&\beta^2\int_{B_R} U^{ij}v_iv_j \varphi\rho^\alpha v^{\beta-2}\,dx+\frac{\beta}{1-\theta}\int_{B_R} w^{-\frac{2-\theta}{1-\theta}}(\varphi'-\frac{\varphi w^{-1}}{1-\theta})|\nabla w|^2 \rho^\alpha v^{\beta-1}\,dx\nonumber\\[5pt]
&&+ \frac{\alpha \beta}{1-\theta}\int_{B_R} w^{-\frac{2-\theta}{1-\theta}}w_i\rho_i\varphi\rho^{\alpha-1}v^{\beta-1}\,dx+\frac{\beta(\beta-1)}{1-\theta}\int_{B_R} w^{-\frac{2-\theta}{1-\theta}}w_iv_i\varphi\rho^{\alpha}v^{\beta-2}\,dx\nonumber\\[5pt]
&&+ 2\beta\int_{B_R} U^{ij}v_iw_j\varphi' \rho^\alpha v^{\beta-1}\,dx+2\alpha \beta\int_{B_R} U^{ij} v_i\rho_j\varphi \rho^{\alpha-1}v^{\beta-1}\,dx\label{fm3} \\[5pt]
&&+\int_{B_R} U^{ij}w_iw_j\varphi''\rho^{\alpha}v^\beta\,dx+\int_{B_R} f\varphi'\rho^\alpha v^\beta\,dx+2\alpha\int_{B_R} U^{ij}w_i\rho_j\varphi' \rho^{\alpha-1}v^\beta\,dx\nonumber\\[5pt]
&&+\alpha\int_{B_R} U^{ij}\rho_{ij}\varphi\rho^{\alpha-1}v^\beta\,dx+\alpha(\alpha-1)\int_{B_R} U^{ij}\rho_i\rho_j\varphi\rho^{\alpha-2}v^\beta\,dx.\nonumber
\end{eqnarray}
Now we choose $\varphi(w)=e^{e^{Aw}}$, where $A>0$ is to be determined later. Then we know that
\beq\label{phi'}
\varphi'(w)=Ae^{Aw}\varphi,\
\varphi''(w)=(A^2e^{Aw}+A^2e^{2Aw})\varphi.
\eeq
We will estimate the right hand side of \eqref{fm3} term by term.

By  \eqref{phi'}, the two terms of the third line in \eqref{fm3} satisfy
\begin{eqnarray*}
&&\left|2\beta\int_{B_R} U^{ij}v_iw_j\varphi' \rho^\alpha v^{\beta-1}\,dx\right|=\left|\int_{B_R} 2\beta Ae^{Aw} U^{ij}v_iw_j \varphi\rho^\alpha v^{\beta-1}\,dx\right|\\[5pt]
&& \ \ \ \ \leq \int_{B_R} \beta^2\frac{e^{Aw}}{1/2+e^{Aw}} U^{ij}v_iv_j \varphi\rho^\alpha v^{\beta-2}\,dx+\int_{B_R}(\frac{A^2}{2}e^{Aw}+A^2e^{2Aw}) U^{ij}w_iw_j \varphi \rho^\alpha v^{\beta}\,dx,
\end{eqnarray*}
and
\begin{eqnarray*}
\left|2\alpha \beta\int_{B_R} U^{ij} v_i\rho_j\varphi \rho^{\alpha-1}v^{\beta-1}\,dx\right|
&\leq&\int_{B_R} \frac{\beta^2}{8(1+2e^{Aw})} U^{ij}v_iv_j\varphi\rho^\alpha v^{\beta-2}\,dx\\[5pt]&&+\int_{B_R} 8(1+2e^{Aw})\alpha^2 U^{ij}\rho_i\rho_j\varphi\rho^{\alpha-2}v^\beta\,dx.
\end{eqnarray*}
Note that
$$|\nabla\rho|^2\leq \frac{\triangle u}{\det D^2u}U^{ij}\rho_i\rho_j, \ \ |\nabla v|^2\leq \frac{\triangle u}{\det D^2u}U^{ij}v_iv_j.$$
The two terms of  the second line in \eqref{fm3} satisfy
\begin{eqnarray*}
&&\left|\frac{\alpha \beta}{1-\theta}\int_{B_R} w^{-\frac{2-\theta}{1-\theta}}w_i\rho_i\varphi\rho^{\alpha-1}v^{\beta-1}\,dx\right|\\[5pt]
&\leq& \int_{B_R} \frac{\beta}{(1-\theta)^2} w^{-\frac{3-2\theta}{1-\theta}}|\nabla w|^2 \varphi\rho^\alpha v^{\beta-1}\,dx+\int_{B_R} \frac{1}{4}\alpha^2\beta w^{-\frac{1}{1-\theta}} |\nabla\rho|^2\varphi \rho^{\alpha-2}v^{\beta-1}\,dx\\[5pt]
&\leq&\int_{B_R} \frac{\beta}{(1-\theta)^2} w^{-\frac{3-2\theta}{1-\theta}}|\nabla w|^2 \varphi\rho^\alpha v^{\beta-1}\,dx+\int_{B_R} \frac{1}{4}\alpha^2\beta
U^{ij}\rho_i\rho_j \varphi \rho^{\alpha-2}v^{\beta}\,dx,
\end{eqnarray*}
and
\begin{eqnarray*}
&&\left|\frac{\beta(\beta-1)}{1-\theta}\int_{B_R} w^{-\frac{2-\theta}{1-\theta}}w_iv_i\varphi\rho^{\alpha}v^{\beta-2}\,dx\right|\\[5pt]
&\leq&\int_{B_R} \frac{(\beta-1)^2}{3(1-\theta)^2}(1+2e^{Aw})w^{-\frac{3-2\theta}{1-\theta}}|\nabla w|^2 \varphi\rho^\alpha v^{\beta-1}\,dx+\int_{B_R} \frac{3\beta^2w^{-\frac{1}{1-\theta}}}{4(1+2e^{Aw})}|\nabla v|^2 \varphi\rho^\alpha v^{\beta-3}\,dx\\[5pt]
&\leq&\int_{B_R}  \frac{(\beta-1)^2}{3(1-\theta)^2}(1+2e^{Aw})w^{-\frac{3-2\theta}{1-\theta}}|\nabla w|^2 \varphi\rho^\alpha v^{\beta-1}\,dx+\int_{B_R} \frac{3\beta^2}{4(1+2e^{Aw})} U^{ij}v_iv_j \varphi\rho^{\alpha}v^{\beta-2}\,dx.
\end{eqnarray*}
 By \eqref{phi'}, the last term of the fourth line in \eqref{fm3} satisfies
\begin{eqnarray*}
\left|2\alpha \int_{B_R} U^{ij} w_i\rho_j\varphi' \rho^{\alpha-1}v^{\beta}\,dx\right|
\leq \int_{B_R} \frac{A^2}{2}e^{Aw} U^{ij}w_iw_j \varphi\rho^{\alpha}v^\beta\,dx&+&\int_{B_R} 2\alpha^2 e^{Aw} U^{ij}\rho_i\rho_j \varphi\rho^{\alpha-2} v^\beta\,dx.
\end{eqnarray*}
Note that $f=\div(g)+h$, where $g$ is a bounded vector field. We denote $|g|=\sqrt{\sum_i (g^i)^2}$. Then by integration by parts, we have
\begin{eqnarray*}
&&\left|\int_{B_R}f\varphi'\rho^\alpha v^\beta\,dx\right|\\[5pt]
&=&\left|\int_{B_R} h\varphi'\rho^\alpha v^\beta\,dx-\int_{B_R} g^iw_i\varphi''\rho^\alpha v^\beta\,dx-\alpha\int_{B_R} g^i\rho_i \varphi'\rho^{\alpha-1}v^\beta\,dx-\beta\int_{B_R} g^iv_i\varphi'\rho^\alpha v^{\beta-1}\,dx\right|\\[5pt]
&\leq&\int_{B_R} \frac{1}{2}w^{-\frac{3-2\theta}{1-\theta}}|\nabla w|^2\varphi \rho^\alpha v^{\beta-1}\,dx+\int_{B_R} \frac{1}{2}(A^2e^{Aw}+A^2e^{2Aw})^2w^{\frac{3-2\theta}{1-\theta}}|g|^2\varphi\rho^\alpha v^{\beta+1}\,dx\\[5pt]
&&+\int_{B_R} \frac{\alpha}{2}w^{-\frac{1}{1-\theta}}|\nabla \rho|^2Ae^{Aw}\varphi\rho^{\alpha-2}v^{\beta-1}\,dx
+\int_{B_R}\frac{\alpha}{2}w^{\frac{1}{1-\theta}}|g|^2Ae^{Aw}\varphi\rho^{\alpha}v^{\beta+1}\,dx+\int_{B_R} |h|\varphi'\rho^\alpha v^\beta\,dx\\[5pt]
&&+\int_{B_R} \frac{\beta^2w^{-\frac{1}{1-\theta}}}{16(1+2e^{Aw})}|\nabla v|^2\varphi\rho^\alpha v^{\beta-3}\,dx+\int_{B_R} 4(1+2e^{Aw})A^2e^{2Aw}w^{\frac{1}{1-\theta}}|g|^2\varphi\rho^\alpha v^{\beta+1}\,dx\\[5pt]
&\leq&\int_{B_R} \frac{1}{2}w^{-\frac{3-2\theta}{1-\theta}}|\nabla w|^2\varphi \rho^\alpha v^{\beta-1}\,dx+\int_{B_R} \frac{1}{2}(A^2e^{Aw}+A^2e^{2Aw})^2w^{\frac{3-2\theta}{1-\theta}}|g|^2\varphi\rho^\alpha v^{\beta+1}\,dx\\[5pt]
&&+\int_{B_R} \frac{\alpha}{2}U^{ij}\rho_i\rho_j Ae^{Aw}\varphi\rho^{\alpha-2}v^{\beta}\,dx
+\int_{B_R}\frac{\alpha}{2}w^{\frac{1}{1-\theta}}|g|^2Ae^{Aw}\varphi\rho^{\alpha}v^{\beta+1}\,dx+\int_{B_R} |h|\varphi'\rho^\alpha v^\beta\,dx\\[5pt]
&&+\int_{B_R} \frac{\beta^2}{16(1+2e^{Aw})}U^{ij}v_iv_j\varphi\rho^\alpha v^{\beta-2}\,dx+\int_{B_R} 4(1+2e^{Aw})A^2e^{2Aw}w^{\frac{1}{1-\theta}}|g|^2\varphi\rho^\alpha v^{\beta+1}\,dx.
\end{eqnarray*}
Hence, \eqref{fm3} reduces to
\begin{eqnarray*}
0&\geq& \int_{B_R} \frac{\beta^2}{16(1+2e^{Aw})} U^{ij}v_iv_j \varphi \rho^{\alpha} v^{\beta-2}\,dx
+\alpha\int_{B_R} U^{ij}\rho_{ij}\varphi \rho^{\alpha-1}v^\beta\,dx-\int_{B_R} |h|\varphi'\rho^\alpha v^\beta\,dx\\[5pt]
&&+\int_{B_R} \left(\frac{\beta}{1-\theta}Awe^{Aw}-\frac{(\beta-1)^2}{3(1-\theta)^2}(1+2e^{Aw})-\frac{2\beta}{(1-\theta)^2}-
\frac{1}{2}\right)w^{-\frac{3-2\theta}{1-\theta}}|\nabla w|^2 \varphi\rho^\alpha v^{\beta-1}\,dx\\[5pt]
&&+\int_{B_R} \left(\alpha(\alpha-1)-(8+18e^{Aw}+\frac{1}{4}\beta)\alpha^2-\frac{\alpha}{2}Ae^{Aw}\right) U^{ij}\rho_i\rho_j\varphi\rho^{\alpha-2}v^\beta\,dx\\[5pt]
&&-\int_{B_R} \frac{1}{2}(A^2e^{Aw}+A^2e^{2Aw})^2w^{\frac{3-2\theta}{1-\theta}}|g|^2\varphi\rho^\alpha v^{\beta+1}\,dx-\int_{B_R}\frac{\alpha}{2}w^{\frac{1}{1-\theta}}|g|^2Ae^{Aw}\varphi\rho^{\alpha}v^{\beta+1}\,dx\\[5pt]
&&-\int_{B_R} 4(1+2e^{Aw})A^2e^{2Aw}w^{\frac{1}{1-\theta}}|g|^2\varphi\rho^\alpha v^{\beta+1}\,dx.
\end{eqnarray*}
Now we choose $A$ sufficiently large such that
\begin{eqnarray*}
\frac{\beta}{1-\theta}Awe^{Aw}-\frac{(\beta-1)^2}{3(1-\theta)^2}(1+2e^{Aw})-\frac{2\beta}{(1-\theta)^2}-\frac{1}{2}>0,
\end{eqnarray*}
i.e.,
\begin{eqnarray*}
(3\beta(1-\theta)Aw-2(\beta-1)^2)e^{Aw}-(\beta-1)^2-6\beta-\frac{3(1-\theta)^2}{2}>0.
\end{eqnarray*}
Note that
$$U^{ij}\rho_i\rho_j\leq v^{n-1}|\nabla\rho|^2\quad \text{and}\quad |U^{ij}\rho_{ij}|\leq 2nv^{n-1}.$$
Then we have
\begin{eqnarray*}
\int_{B_R} U^{ij} v_iv_j\rho^{\alpha}v^{\beta-2}\,dx\leq C\int_{B_R}\rho^\alpha v^{\beta+1}\,dx+C\int_{B_R} \rho^{\alpha-2}v^{\beta+n-1}\,dx+C\int_{B_R} |h|\rho^\alpha v^\beta\,dx.
\end{eqnarray*}
Since
\begin{eqnarray*}
U^{ij}(v^{\frac{\beta}{2}}\rho^{\frac{\alpha}{2}})_i(v^{\frac{\beta}{2}}\rho^{\frac{\alpha}{2}})_j&=&\frac{\beta^2}{4}U^{ij}v_iv_j\rho^{\alpha}v^{\beta-2}
+\frac{\alpha^2}{4}U^{ij}\rho_i\rho_j\rho^{\alpha-2}v^\beta+\frac{\alpha \beta}{2}U^{ij}v_i\rho_j\rho^{\alpha-1}v^{\beta-1}\\
&\leq&  \frac{\beta^2}{2}U^{ij}v_iv_j\rho^{\alpha}v^{\beta-2}
+\frac{\alpha^2}{2}U^{ij}\rho_i\rho_j\rho^{\alpha-2}v^\beta,
\end{eqnarray*}
we have
\begin{eqnarray*}
\int_{B_R} U^{ij} (v^{\frac{\beta}{2}}\rho^{\frac{\alpha}{2}})_i(v^{\frac{\beta}{2}}\rho^{\frac{\alpha}{2}})_j\,dx&\leq& C\int_{B_R} \rho^\alpha v^{\beta+1}\,dx+C\int_{B_R} \rho^{\alpha-2}v^{\beta+n-1}\,dx+C\int_{B_R} |h|\rho^\alpha v^\beta\,dx\\[5pt]
&\leq& C\int_{B_R} v^{\beta+n-1}\,dx+C\left(\int_{B_R}|h|^q\,dx\right)^{\frac{1}{q}}
\left(\int_{B_R}(v^{\frac{\beta}{2}}\rho^{\frac{\alpha}{2}})^{\frac{2q}{q-1}}\,dx\right)^{1-\frac{1}{q}}.
\end{eqnarray*}
Choosing $\chi\geq \frac{2q}{q-1}$ and using Lemma \ref{Sobolev} with $n=2$,
we have
\begin{eqnarray}
\left[\int_{B_R} (v^{\frac{\beta}{2}}\rho^{\frac{\alpha}{2}})^{\chi}\,dx\right]^{\frac{2}{\chi}}\leq C\int_{B_R} v^{\beta+1}\,dx+C.\label{fm7}
\end{eqnarray}
By the interior $W^{2,1+\varepsilon}$-estimates for Monge-Amp\`ere equation \cite{DFS, S} with \eqref{detcond-1}, we know that there is a small $\varepsilon_0>0$, that depends only on $\lambda$ and $\Lambda$ such that $\|v\|_{L^{1+\varepsilon_0}}\leq C(\lambda,\Lambda)$. Then in \eqref{fm7}, we choose $\beta=\varepsilon_0$ and $\chi=\frac{2p}{\varepsilon_0}$ for $p\in(\frac{q}{q-1}\varepsilon_0,+\infty)$ to get
$\|\Delta u\|_{L^p(B_{R/2})}\leq C$.

For $\theta=1$, we know that $w=\ln\det D^2u$, and we can obtain the $L^p$-bound of $\Delta u$ following the same method used above. Alternatively, we can apply tha Legendre transform to $u$ for the case $\theta=1$ to get the estimate of $D^2u$ by the strictly convexity of $u$ with condition \eqref{detcond-1}.
\end{proof}


Now we establish the higher estimates of $u$ in terms of the $W^{2,p}$-estimate.
By chaining together a sequence of balls, in a standard fashion, we know $\Delta u\in L^p_{loc}(\Omega)$ for any fixed $p\in[1,+\infty)$. Then by Theorem \ref{thm4.2} and the same arguments as \cite[Proposition 6.1]{Lo} or \cite[Theorem 1.3]{Le3},  we get the H\"older continuity of $w$ and all the higher order estimates of $u$.



As we mentioned in the introduction, there is another approach to establish the $C^2$-estimate from $W^{2,p}$ when $f\in W^{1,q}(B_1)$, which holds in any dimension.
\begin{theo}\label{C2-est}
Let $u$ be a uniformly convex smooth solution to equation \eqref{4-eq-inh} in $B_1$ with $\theta=0$ or $1$. Assume that $f\in W^{1,q}(B_1)$ for some $q>\frac{n}{2}$. Suppose $u$ satisfies
\eqref{detcond-1}. Assume that $\|\Delta u\|_{L^{p_n}(B_1)}$ is bounded for some $p_n>\frac{n(n-1)}{2}$. Then there exists a constant $C>0$ depending on $\lambda$, $\Lambda$, $\|\Delta u\|_{L^{p_n}(B_1)}$ and $\|f\|_{W^{1,q}(B_1)}$, such that
$$\sup_{B_{1/2}}\Delta u\leq C.$$
\end{theo}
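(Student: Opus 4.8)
The plan is to run a Moser iteration for $v:=\Delta u$ on concentric balls, using that for $\theta=0,1$ the quantity $w$ is \emph{bounded}. Indeed, by \eqref{detcond-1} one has $w=(\det D^2u)^{-1}\in[\Lambda^{-1},\lambda^{-1}]$ when $\theta=0$ and $w=\log\det D^2u\in[\log\lambda,\log\Lambda]$ when $\theta=1$; in both cases $\log\det D^2u=\psi(w)$ for a fixed smooth $\psi$ with $\psi,\psi',\psi''$ bounded and $\psi'$ bounded away from $0$ on the compact range $I$ of $w$, and $v\ge n\lambda^{1/n}>0$. So $u$ solves the Monge--Amp\`ere equation $\det D^2u=e^{\psi(w)}$ with bounded right-hand side, $w$ solves the divergence-form linearized equation $\div(U^{ij}w_i)=f$, and since $q>n/2$ one may write $f=\div F$ with $F\in W^{2,q}(B_1)\subset L^\infty$.

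First I would record the second-order identity obtained by differentiating $\log\det D^2u=\psi(w)$ twice and tracing,
$$u^{ij}v_{ij}=\Delta\bigl(\psi(w)\bigr)+Q,\qquad Q:=u^{ip}u^{qj}\sum_m u_{pqm}u_{ijm}\ge 0,$$
together with the pointwise bounds $Q\ge u^{ij}v_iv_j/v$ (as in \eqref{fm6}), $|\nabla w|^2\le C\,Q$ and $|\nabla v|^2\le C\,v^{2}Q$, all by Cauchy--Schwarz after diagonalizing $D^2u$. Multiplying by $\det D^2u$ one has $\int U^{ij}z_{ij}\,dx=0$ for every $z\in C_c^\infty$ because $U^{ij}$ is divergence free; I would take $z:=\rho^{2}v^{2\beta-1}\varphi(w)$ with $\rho$ a cutoff, $\beta\ge1$, and $\varphi$ a rapidly increasing convex function of the bounded variable $w$ --- of the type $\varphi(w)=e^{e^{Aw}}$ used in the proof of Theorem \ref{W2p-est}. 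When $U^{ij}z_{ij}$ is expanded, the $v_{ij}$-contribution is rewritten by the identity above; the $w_{ij}$-contribution is handled by $U^{ij}w_{ij}=f$ (producing a forcing term $\int f\,\varphi'(w)\rho^{2}v^{2\beta-1}$, controlled after an integration by parts using $F\in L^\infty$); and the surviving good terms $\int U^{ij}v_iv_j\rho^{2}v^{2\beta-2}\varphi$ and $\int U^{ij}w_iw_j\varphi''(w)\rho^{2}v^{2\beta-1}$ --- the latter comparable to $A^{2}\int U^{ij}w_iw_j\varphi(w)\rho^{2}v^{2\beta-1}$, hence as large as needed once $A$ is fixed --- are used to absorb the dangerous cross terms $\nabla w\cdot\nabla v$ and $\nabla w\cdot\nabla(\det D^2u)$ that arise from the $\Delta(\psi(w))$ term after integration by parts (the $\psi''|\nabla w|^2$ pieces cancelling in that integration by parts).

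The upshot should be a Caccioppoli inequality
$$\beta^{-2}\int_{B_r}\rho^{2}v^{2\beta-2}\,U^{ij}v_iv_j\ \lesssim\ (R-r)^{-2}\int_{B_R}v^{2\beta+n-1}+\bigl(\text{forcing, controlled by }\|f\|_{W^{1,q}}\bigr)$$
for all $\beta\ge1$ and $B_r\subset B_R\subset B_1$, the shift $2\beta+n-1$ coming from $U^{ij}\rho_i\rho_j\lesssim v^{n-1}|\nabla\rho|^2$. Combined with the Monge--Amp\`ere Sobolev inequality --- Lemma \ref{Sobolev} when $n=2$, and its higher-dimensional form of Tian--Wang \cite{TW} (with the sharp Sobolev exponent $\chi=\tfrac{2n}{n-2}$) when $n\ge3$ --- applied to $\rho v^{\beta}$, this passes from integrability of $\Delta u$ at exponent $p$ to exponent $\tfrac{\chi}{2}(p-n+1)$. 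That map is strictly expanding exactly when $p>\tfrac{n(n-1)}{2}$ (since $\tfrac{\chi(n-1)}{\chi-2}=\tfrac{n(n-1)}{2}$ for this $\chi$), so starting from the hypothesis $\Delta u\in L^{p_n}$, $p_n>\tfrac{n(n-1)}{2}$, the exponents grow geometrically, and the standard Moser iteration over a shrinking family of balls yields $\sup_{B_{1/2}}\Delta u\le C$.

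The crux, and the reason the argument is subtle, is the coupling between the two equations: the quantity being iterated is $\Delta u$, but the inhomogeneity in its equation is $\Delta(\log\det D^2u)=\Delta(\psi(w))$, which forces $w$-gradient terms ($\nabla w\cdot\nabla(\Delta u)$ and $\nabla w\cdot\nabla(\det D^2u)$) into the estimate with constants that do \emph{not} decay as $\beta\to\infty$. Making the iteration close requires all at once: using boundedness of $w$ (valid only for $\theta=0,1$) to insert the weight $\varphi(w)$ and thereby create a genuinely large good term $\int U^{ij}w_iw_j(\cdot)$; keeping the forcing inside the De Giorgi-admissible class via $F\in L^\infty$ (hence the need for $q>n/2$); and choosing $A$, $\beta$, the cutoff exponents and tracking the dependence on $\det D^2u$ --- which is bounded but \emph{not} close to a constant, so that the Monge--Amp\`ere Sobolev inequality, rather than the Euclidean one, must be used --- so that every integral balances. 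It is precisely this balancing that pins down the thresholds $p_n>n(n-1)/2$ and $q>n/2$.
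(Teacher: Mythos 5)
The key step of your proposal --- absorbing the $\nabla w\cdot\nabla v$ cross terms by the $\varphi''$-term --- does not close as $\beta\to\infty$, and this is a genuine gap. Concretely, after inserting $z=\rho^2 v^{2\beta-1}\varphi(w)$ into $\int U^{ij}z_{ij}\,dx=0$ and integrating $\det D^2u\,\Delta(\psi(w))$ by parts, you produce cross terms of the form $(2\beta-1)(2\beta-2)\int\det D^2u\,\rho^2 v^{2\beta-3}\varphi\,\nabla v\cdot\nabla w$ and $(2\beta-1)\int U^{ij}v_iw_j\,\varphi'\rho^2 v^{2\beta-2}$, with coefficients growing like $\beta^2$ and $\beta$. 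After Cauchy--Schwarz in the Monge--Amp\`ere metric (the only one compatible with the quadratic forms $U^{ij}v_iv_j$ and $U^{ij}w_iw_j$ that you have), the $\nabla w$-halves of these terms land on $\int U^{ij}w_iw_j\,\rho^2 v^{2\beta-1}\varphi$ with $\beta$-growing coefficients. Your only positive term of that shape is $\int U^{ij}w_iw_j\,\varphi''\rho^2 v^{2\beta-1}$; but once $A$ is fixed, $\varphi''/\varphi=A^2e^{Aw}(1+e^{Aw})$ is bounded above because $w$ is bounded, so this supplies only a fixed --- not $\beta$-growing --- multiple of $\int U^{ij}w_iw_j\,\rho^2 v^{2\beta-1}\varphi$. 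The claim that it is ``as large as needed once $A$ is fixed'' is therefore not correct. Raising $A$ with $\beta$ fails too, since $\sup\varphi/\inf\varphi=\exp\bigl(e^{A\overline{w}}-e^{A\underline{w}}\bigr)$ then grows super-exponentially in $\beta$ and the iteration constants do not sum. This is precisely why the $\varphi$-device works in Theorem~\ref{W2p-est}, where $\beta=\varepsilon_0$ is held fixed and only the Sobolev exponent $\chi$ is taken large, but cannot be promoted to a Moser iteration. The subsidiary pointwise claims $|\nabla w|^2\le CQ$ and $|\nabla v|^2\le Cv^2Q$ also hide factors like $\mathrm{tr}\,(D^2u)^{-1}$ or $\Delta u$ whose boundedness is exactly what is at stake.

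The paper's proof (following Chen--Cheng) is structurally different and designed to sidestep this. It iterates not on $\Delta u$ alone but on $z=e^{w/2}v+\Delta u$, where $v=u^{kl}w_kw_l$ is the Monge--Amp\`ere gradient energy of $w$ (not $\Delta u$). The crucial step is a pointwise, not integral, cancellation: $u^{ij}(\Delta u)_{ij}$ produces the unsigned term $\Delta w$, and $|\Delta w|\le\tfrac12\sum_k(u^{kk})^2w_{kk}^2e^{w/2}+\tfrac12(\Delta u)^2e^{-w/2}$; the first piece is dominated by the nonnegative Hessian term $u^{ii}u^{kk}w_{ik}^2e^{w/2}$ coming from $u^{ij}(e^{w/2}v)_{ij}$ (which, after completing the square, is otherwise thrown away), while the second piece is $\le C(\Delta u)z$ since $z\ge\Delta u$. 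This yields, at every point, $u^{ij}z_{ij}\ge -C(|f|+|\nabla f|+\Delta u)z$, so $z$ is a bona fide divergence-form subsolution with zeroth-order coefficients in $L^{p_n}$ and $L^q$, and the Moser iteration then runs with constants independent of $\beta$ and no $\beta$-dependent absorption. Without such a coupling of $\Delta u$ with a gradient-energy carrier for $w$, the iteration on $\Delta u$ alone does not close.
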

\begin{proof}
The proof is inspired by \cite{CC}.
We consider the case $\theta=1$. The case $\theta=0$ then follows by using the Legendre transform.
Denote $v=u^{kl}w_kw_l$, where $\{u^{kl}\}$ is the inverse matrix of $D^2u$.  By direct calculations,
\begin{eqnarray*}
v_i&=&-u^{ks}u^{tl}u_{sti}w_kw_l+2u^{kl}w_{ik}w_l,\label{v_i}\\[5pt]
v_{ij}&=&u^{kp}u^{rs}u^{tl}u_{prj}u_{sti}w_kw_l+u^{ks}u^{lp}u^{rt}u_{prj}u_{sti}w_kw_l-u^{ks}u^{tl}u_{stij}w_kw_l
\nonumber\\
&&-2u^{ks}u^{tl}u_{sti}w_{jk}w_l-2u^{ks}u^{tl}u_{stj}w_{ik}w_l+2u^{kl}w_{ijk}w_l+2u^{kl}w_{ik}w_{jl}.\label{v_ij}
\end{eqnarray*}
For any $x_0\in B_1$, we can choose a coordinate transformation so that $u_{ij}(x_0)=u_{ii}(x_0)\delta_{ij}$. Then
\begin{eqnarray}
u^{ij}v_{ij}(x_0)&=&2u^{ii}u^{kk}u^{rr}u^{ll}u_{kri}u_{lri}w_kw_l-u^{ii}u^{kk}u^{ll}u_{iikl}w_kw_l-4u^{ii}u^{kk}u^{ll}u_{kli}w_{ik}w_l
\nonumber\\
&&+2u^{ii}u^{kk}w_{iik}w_k+2u^{ii}u^{kk}w_{ik}^2.\label{u^ijv_ij}
\end{eqnarray}
Differentiating $\ln\det D^2u=w$ twice, we have
\begin{eqnarray}
u^{ij}u_{ijkl}-u^{is}u^{tj}u_{ijk}u_{stl}=(\ln\det D^2u)_{kl}=w_{kl}.\label{G_kl}
\end{eqnarray}
Note that \eqref{4-eq-inh} can be written as $u^{ij}w_{ij}=e^{-w}f$. Differentiating the equation respect to $x_k$-direction directly yields
\begin{eqnarray}
-u^{ip}u^{rj}u_{prk}w_{ij}+u^{ij}w_{ijk}=e^{-w}(f_k-w_kf).\label{g_k}
\end{eqnarray}
Inserting \eqref{G_kl} and \eqref{g_k} into  \eqref{u^ijv_ij}, we have
\begin{eqnarray}
u^{ij}v_{ij}(x_0)
&=&u^{kk}u^{ll}u^{ii}u^{jj}u_{kji} u_{lji}w_kw_l-2u^{ii}u^{kk}u^{ll}u_{kli}w_{ik}w_l+u^{ii}u^{kk}w_{ik}^2\nonumber\\[5pt]
&&+u^{ii}u^{kk}w_{ik}w_{ik}-u^{kk}u^{ll}w_{kl}w_kw_l+2u^{kk}e^{-w}(f_k-w_kf)w_k\nonumber\\[5pt]
&=&u^{ii}u^{kk}\left(\sum_lu^{ll}u_{ikl}w_l\right)^2-2u^{ii}u^{kk}\left(\sum_lu^{ll}u_{ikl}w_l\right)w_{ik}+u^{ii}u^{kk}w_{ik}^2\nonumber\\[5pt]
&&+u^{ii}u^{kk}w_{ik}^2-u^{kk}u^{ll}w_{kl}w_kw_l+2u^{kk}e^{-w}(f_k-w_kf)w_k\nonumber\\[5pt]
&=&u^{ii}u^{kk}|w_{ik}-\sum_lu^{ll}u_{ikl}w_l|^2+u^{ii}u^{kk}w_{ik}^2-u^{kk}u^{ll}w_{kl}w_kw_l+2u^{kk}e^{-w}(f_k-w_kf)w_k.\label{u^ijv_ij-1}
\end{eqnarray}
Next, we compute
\begin{eqnarray}
u^{ij}(e^{\frac{1}{2}w}v)_{ij}(x_0)&=&\frac{1}{4}e^{\frac{1}{2}w}u^{ii}w_i^2u^{kk}w_k^2
+\frac{1}{2}e^{\frac{1}{2}w}u^{ii}w_{ii}v+e^{\frac{1}{2}w}u^{ii}w_iv_i
+e^{\frac{1}{2}w}u^{ii}v_{ii}.\label{wv}
\end{eqnarray}
Note that
\begin{eqnarray}
u^{ii}w_iv_i&=&-u^{ii}u^{kk}u^{ll}u_{ikl}w_kw_lw_i+2u^{ii}u^{kk}w_{ki}w_kw_i\nonumber\\
&=&u^{ii}u^{kk}w_kw_i(w_{ki}-\sum_l u^{ll}u_{ikl}w_l)+u^{ii}u^{kk}w_{ki}w_kw_i.\label{w_iG_i}
\end{eqnarray}
Combining \eqref{u^ijv_ij-1}, \eqref{wv} and \eqref{w_iG_i}, we have
\begin{eqnarray}
u^{ij}(e^{\frac{1}{2}w}v)_{ij}(x_0)&=&u^{ii}u^{kk}|w_{ik}-\sum_lu^{ll}u_{ikl}w_l+\frac{1}{2}w_iw_k|^2e^{\frac{1}{2}w}+u^{ii}u^{kk}w_{ik}^2e^{\frac{1}{2}w}
\nonumber\\&&-\frac{3}{2}e^{-\frac{1}{2}w}fv+2e^{-\frac{1}{2}w}u^{kk}f_kw_k.\label{u^ijv_ije}
\end{eqnarray}
By \eqref{G_kl}, we have
\begin{eqnarray}
u^{ij}(\Delta u)_{ij}(x_0)=u^{ii}u^{jj}u_{ijk}^2+\Delta w.\label{u_ii}
\end{eqnarray}
Let $z=e^{\frac{1}{2}w}v+\Delta u$.
Note that
$$\Delta w=\sum_{k}w_{kk}\leq\frac{1}{2}(u^{kk})^2w_{kk}^2e^{\frac{1}{2}w}+\frac{1}{2}u_{kk}^2e^{-\frac{1}{2}w}
\leq\frac{1}{2}(u^{kk})^2w_{kk}^2e^{\frac{1}{2}w}+\frac{1}{2}(\Delta u)^2e^{-\frac{1}{2}w}.$$
Then by \eqref{u^ijv_ije} and \eqref{u_ii}, we have
\begin{eqnarray}
u^{ij}z_{ij}(x_0)&\geq& -C|f|v-C|f_k|v-C|f_k|+u^{kk}u^{ll}w_{kl}^2e^{\frac{1}{2}w}+\Delta w\nonumber\\
&\geq&-C(|f|+|f_k|)v-C|f_k|+(u^{kk})^2w_{kk}^2e^{\frac{1}{2}w}-\frac{1}{2}(u^{kk})^2w_{kk}^2e^{\frac{1}{2}w}-\frac{1}{2}(\Delta u)^2e^{-\frac{1}{2}w}\nonumber\\
&\geq&-C(|f|+|f_k|+\Delta u)z,\label{u_ijz_ij}
\end{eqnarray}
where $C=C(\lambda,\Lambda)$. In the last inequality we used $z\geq \Delta u\geq n(\det D^2u)^{\frac{1}{n}}\geq n\lambda^{\frac{1}{n}}$.
Since \eqref{u_ijz_ij} is valid at every point in $B_1$, by \eqref{detcond-1}, we have the following inequality
\beq\label{sub-ineq}
D_j(U^{ij}D_iz)\geq-gz-C\Delta uz\quad \text{in}\,\, B_1,
\eeq
where $g=C(|f|+|f_k|)\in L^{q}(B_1)$.

Next, we drive the upper bound of $z$ by integration and iteration. Let $\eta\in C_0^\infty(B_1)$ be a cutoff function. Multiplying  \eqref{sub-ineq} by
$\varphi=\eta^2z^{\beta-1}$  with $\beta\geq 2$ and by integration by parts, we have
\begin{eqnarray*}
(\beta-1)\int_{B_1}U^{ij}z_iz_j\eta^2z^{\beta-2}\,dx&\leq& -2\int_{B_1}U^{ij}z_i\eta_j\eta z^{\beta-1}\,dx+C\int_{B_1}\Delta u\cdot \eta^2 z^{\beta}\,dx+\int_{B_1}g\eta^2z^{\beta}\,dx\\[5pt]
&\leq&\frac{\beta-1}{2}\int_{B_1} U^{ij}z_iz_j\eta^2z^{\beta-2}\,dx+\frac{2}{\beta-1}\int_{B_1}U^{ij}\eta_i\eta_j z^{\beta}\,dx\\
&&+C\int_{B_1}\Delta u\cdot \eta^2 z^{\beta}\,dx+\int_{B_1}g\eta^2z^{\beta}\,dx,
\end{eqnarray*}
which implies
\begin{eqnarray*}
\int_{B_1} U^{ij}(\eta z^{\frac{\beta}{2}})_i(\eta z^{\frac{\beta}{2}})_j\,dx\leq C\beta\left(\int_{B_1}U^{ij}\eta_i\eta_j z^{\beta}\,dx+\int_{B_1}\Delta u\cdot \eta^2 z^{\beta}\,dx+\int_{B_1}g\eta^2z^{\beta}\,dx\right).
\end{eqnarray*}
Then by the Monge-Amp\`ere Sobolev inequality \cite{TW} and $(U^{ij})\leq (\Delta u)^{n-1}I$, we have
\begin{eqnarray}
\|\eta z^{\frac{\beta}{2}}\|_{L^{p}(B_1)}^2\leq C_0\beta\left(\int_{B_1}(\Delta u)^{n-1}|D\eta|^2 z^{\beta}\,dx+\int_{B_1}\Delta u\cdot \eta^2 z^{\beta}\,dx+\int_{B_1}g\eta^2z^{\beta}\,dx\right),\label{re-hol}
\end{eqnarray}
where $p=2^*=\frac{2n}{n-2}$ for $n>2$ and $p>2$ for $n=2$. Then by H\"older's inequality
$$\int_{B_1} g\eta^2 z^{\beta}\,dx\leq\left(\int_{B_1}g^q\,dx\right)^{\frac{1}{q}}\left(\int_{B_1}|\eta z^{\frac{\beta}{2}}|^{\frac{2q}{q-1}}\right)^{1-\frac{1}{q}}.$$
Since $q>\frac{n}{2}$, we have $2<\frac{2q}{q-1}<2^*$.
By interpolation inequality, we obtain
$$\|\eta z^{\frac{\beta}{2}}\|_{L^{\frac{2q}{q-1}}(B_1)}\leq \varepsilon \|\eta z^{\frac{\beta}{2}}\|_{L^{p}(B_1)}+C(n,q)\varepsilon^{-\frac{n}{2q-n}}\|\eta z^{\frac{\beta}{2}}\|_{L^2(B_1)}.$$
Then we choose $\varepsilon=\left(4C_0\|g\|_{L^q(B_1)}\beta+1\right)^{-\frac{1}{2}}$.  By \eqref{re-hol},
\begin{eqnarray}
\|\eta z^{\frac{\beta}{2}}\|_{L^{p}(B_1)}^2\leq C\beta^\alpha\left(\int_{B_1}(\Delta u)^{n-1}|D\eta|^2 z^{\beta}\,dx+\int_{B_1}\Delta u\cdot \eta^2 z^{\beta}\,dx+\int_{B_1}\eta^2z^{\beta}\,dx\right),\label{fm8}
\end{eqnarray}
where $\alpha=\frac{2q}{2q-n}$. Then by H\"older's inequality, we have
\begin{eqnarray}
\int_{B_1}(\Delta u)^{n-1}|D\eta|^2 z^{\beta}\,dx&\leq&\|\Delta u\|_{L^{p_n}(B_1)}\cdot\| |D\eta| z^{\frac{\beta}{2}}\|_{L^{\frac{2p_n}{p_n-n+1}}(B_1)}^2,\label{fm9}\\[5pt]
\int_{B_1}\Delta u\cdot \eta^2 z^{\beta}\,dx&\leq&\|\Delta u\|_{L^{p_n}(B_1)}\cdot \|\eta z^{\frac{\beta}{2}}\|_{L^{\frac{2p_n}{p_n-1}}(B_1)}^2.\label{fm10}
\end{eqnarray}
Combining \eqref{fm8}, \eqref{fm9} and \eqref{fm10}, we get
\begin{eqnarray*}
\|\eta z^{\frac{\beta}{2}}\|_{L^{p}(B_1)}&\leq& C\beta^{\alpha/2}\left(\| |D\eta| z^{\frac{\beta}{2}}\|_{L^{\frac{2p_n}{p_n-n+1}}(B_1)}+\|\eta z^{\frac{\beta}{2}}\|_{L^{\frac{2p_n}{p_n-1}}(B_1)}+\|\eta z^{\frac{\beta}{2}}\|_{L^2(B_1)}\right)\\[5pt]
&\leq& C\beta^{\alpha/2}\left(\| |D\eta| z^{\frac{\beta}{2}}\|_{L^{\frac{2p_n}{p_n-n+1}}(B_1)}+\|\eta z^{\frac{\beta}{2}}\|_{L^{\frac{2p_n}{p_n-n+1}}(B_1)}\right).
\end{eqnarray*}
Now for any $0<r<R\leq 1$, we choose a cutoff function $\eta\in C_0^\infty(B_R)$ such that
$$0\leq\eta\leq 1,\quad\eta\equiv 1\,\,\text{in}\,\, B_r\quad\text{and}\quad |D\eta|\leq \frac{2}{R-r}.$$
Then we obtain
$$\|z^{\frac{\beta}{2}}\|_{L^{p}(B_r)}\leq \frac{C\beta^{\frac{\alpha}{2}}}{R-r}\|z^{\frac{\beta}{2}}\|_{L^{\frac{2p_n}{p_n-n+1}}(B_R)}.$$
By the assumption $p_n>\frac{n(n-1)}{2}$, we know $p>\frac{2p_n}{p_n-n+1}$. Denote $\chi=p\frac{p_n-n+1}{2p_n}>1$. Then
\begin{eqnarray}
\|z\|_{L^{\frac{\beta p_n}{p_n-n+1}\chi}(B_r)}\leq \frac{C^{\frac{2}{\beta}}\beta^{\frac{\alpha}{\beta}}}{(R-r)^{\frac{2}{\beta}}}\|z\|_{L^{\frac{\beta p_n}{p_n-n+1}}(B_R)}.\label{iter}
\end{eqnarray}
We iterate \eqref{iter} to get the desired estimate. Set
$$\beta_i=2\chi^{i}\quad\text{and}\quad R_i=r+\frac{R-r}{2^{i}},\  i=0,1,2,\cdots,$$
i.e.,
$$\beta_i=\chi\beta_{i-1}\ \text{and} \ R_{i-1}-R_i=\frac{R-r}{2^{i}},\  i=1,2,\cdots.$$
By \eqref{iter},
$$\|z\|_{L^{\frac{2p_n}{p_n-n+1}\chi^{i+1}}(B_{R_{i+1}})}
\leq C^{\sum_{j=0}^i\frac{2}{\beta_j}}\cdot\prod_{j=0}^i \beta_j^{\frac{\alpha}{\beta_j}}\cdot 4^{\sum_{j=0}^i\frac{j}{\beta_j}}\frac{1}{(R-r)^{\sum_{j=0}^i\frac{2}{\beta_j}}}\cdot\|z\|_{L^{\frac{2p_n}{p_n-n+1}}(B_R)}.$$
Letting $i\to\infty$, by Young's inequality, we have
$$\begin{aligned}
\|z\|_{L^\infty(B_r)}&\leq \frac{C}{(R-r)^{\frac{\chi}{\chi-1}}}\|z\|_{L^{\frac{2p_n}{p_n-n+1}}(B_R)}\\[4pt]
&=\frac{C}{(R-r)^{\frac{\chi}{\chi-1}}}\|z\|_{L^{1}(B_R)}^{\frac{p_n-n+1}{2p_n}}\cdot\|z\|_{L^{\infty}(B_R)}^{\frac{p_n+n-1}{2p_n}}\\[4pt]
&\leq \frac{1}{2}\|z\|_{L^{\infty}(B_R)}+\frac{C}{(R-r)^{\frac{\chi}{\chi-1}\cdot\frac{2p_n}{p_n-n+1}}}\|z\|_{L^1(B_R)}.
\end{aligned}$$
Set $f(t)=\|z\|_{L^\infty(B_t)}$ for $t\in (0,1].$ Then for any $0<r<R\leq \overline{R}<1$,
$$f(r)\leq \frac{1}{2}f(R)+\frac{C}{(R-r)^{\frac{\chi}{\chi-1}\cdot\frac{2p_n}{p_n-n+1}}}\|z\|_{L^1(B_{\overline{R}})}.$$
We apply Lemma \ref{lem4.3} below to get
\begin{eqnarray}
f(r)\leq \frac{C}{(R-r)^{\frac{\chi}{\chi-1}\cdot\frac{2p_n}{p_n-n+1}}}\|z\|_{L^1(B_{\overline{R}})}.\label{f(r)}
\end{eqnarray}
It remains to show $\|z\|_{L^1(B_{\overline{R}})}\leq C$. It is clear that $\Delta u\in L^1(B_{\overline{R}})$,
hence it is suffices to estimate the integral of $v$.
Let $\eta\in C_0^\infty(B_1)$  be a cutoff function such that $\eta\equiv 1$ in ${B_{\overline{R}}}$.
Multiplying \eqref{4-eq-inh} by  $\varphi=\eta^2w$ and integrating by parts, we have
$$\int_{B_1}U^{ij}w_iw_j\eta^2\,dx+2\int_{B_1}U^{ij}w_i\eta_j\eta w\,dx=\int_{B_1}fw\eta^2\,dx.$$
Then by the Cauchy inequality, we get
\begin{eqnarray*}
\frac{1}{2}\int_{B_1}U^{ij}w_iw_j\eta^2\,dx&\leq& 2\int_{B_1}U^{ij}\eta_i\eta_j w^2\,dx+\int_{B_1}|f|w\eta^2\,dx\\[5pt]
&\leq&2\int_{B_1}(\Delta u)^{n-1}|D\eta|^2w^2\,dx+\int_{B_1}|f|w\eta^2\,dx.
\end{eqnarray*}
Hence $\int_{B_{\overline{R}}}u^{kl}w_kw_l\,dx\leq C$ follows by $\Delta u\in L^{p_n}(B_1)$, $f\in L^q(B_1)$.
Then we complete the proof by choosing $r=\frac{1}{2}$ and $R=\overline{R}$ in \eqref{f(r)}.
\end{proof}
\begin{lem}[{\cite[Lemma 4.3]{HL}}]\label{hma1}\label{lem4.3}
Let $f(t)\geq 0$ be bounded in $[\tau_0,\tau_1]$ with $\tau_0\geq 0$. Suppose for $\tau_0\leq t<s\leq \tau_1$ we have
$$f(t)\leq \beta f(s)+\frac{A}{(s-t)^\alpha}+B$$
for some $\beta\in[0,1)$. Then for any $\tau_0\leq t<s\leq\tau_1$ there holds
$$f(t)\leq C(\alpha,\beta)\left\{\frac{A}{(s-t)^\alpha}+B\right\}.$$
\end{lem}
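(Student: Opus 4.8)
\textit{Proof sketch.} The plan is to prove this iteration lemma by the classical ``hole--filling'' argument: one fixes $t<s$, interpolates between them by a geometric sequence of points, and applies the recursive inequality along that sequence so that the factor $\beta$ builds up a convergent geometric series while the blow--up terms $A(\,\cdot\,)^{-\alpha}$ remain summable. Concretely, fix $\tau_0\le t<s\le\tau_1$ and a parameter $\lambda\in(0,1)$ to be chosen, and set
\[
t_0=t,\qquad t_{i+1}=t_i+(1-\lambda)\lambda^i(s-t),
\]
so that $t_i=t+(1-\lambda^i)(s-t)$ increases to $s$, all $t_i\in[t,s]\subset[\tau_0,\tau_1]$, and $t_{i+1}-t_i=(1-\lambda)\lambda^i(s-t)$. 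Applying the hypothesis to each consecutive pair $(t_i,t_{i+1})$ and iterating $k$ times gives
\[
f(t)\;\le\;\beta^k f(t_k)+\sum_{i=0}^{k-1}\beta^i\!\left[\frac{A}{(1-\lambda)^\alpha\lambda^{i\alpha}(s-t)^\alpha}+B\right].
\]

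Next I would fix $\lambda$ with $\beta^{1/\alpha}<\lambda<1$, which is possible precisely because $\beta\in[0,1)$; then the ratio $\beta\lambda^{-\alpha}<1$, so both series $\sum_i(\beta\lambda^{-\alpha})^i$ and $\sum_i\beta^i$ converge. Since $f$ is bounded on $[\tau_0,\tau_1]$ and $\beta<1$, the term $\beta^k f(t_k)\to0$ as $k\to\infty$. Passing to the limit and summing the two geometric series yields
\[
f(t)\;\le\;\frac{1}{(1-\lambda)^\alpha\,(1-\beta\lambda^{-\alpha})}\cdot\frac{A}{(s-t)^\alpha}+\frac{B}{1-\beta},
\]
which is the claimed estimate with $C(\alpha,\beta)=\max\{(1-\lambda)^{-\alpha}(1-\beta\lambda^{-\alpha})^{-1},\,(1-\beta)^{-1}\}$ for the fixed choice $\lambda=\lambda(\alpha,\beta)$.

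The only point needing any care is the selection of the interpolation ratio $\lambda$: it must be close enough to $1$ that $\beta\lambda^{-\alpha}<1$, ensuring the penalty series converges, yet bounded away from $1$ so that the constant $(1-\lambda)^{-\alpha}$ does not blow up; any fixed value strictly between $\beta^{1/\alpha}$ and $1$ meets both requirements. Everything else is routine bookkeeping of geometric sums, and the boundedness hypothesis on $f$ is used only to discard $\beta^k f(t_k)$ in the limit — in particular no continuity or monotonicity of $f$ is required.
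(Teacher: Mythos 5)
Your proof is correct and is exactly the standard iteration argument; the paper does not prove this lemma but simply cites it from Han--Lin \cite[Lemma 4.3]{HL}, where the same geometric-interpolation / hole-filling proof is given. The only point worth flagging is one you already handled: the choice $\lambda\in(\beta^{1/\alpha},1)$ is always available because $\beta<1$ forces $\beta^{1/\alpha}<1$, and boundedness of $f$ is needed only to kill the term $\beta^k f(t_k)$ in the limit.
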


\begin{rem}
 Note that for $n=2$, we have already got an $L^p$-bound of $\Delta u$ in Theorem \ref{W2p-est} for all $p\in[1,+\infty)$. However, it is still unknown how to obtain $L^p$-bound of $\Delta u$ when $n\geq 3$ by the integral method used in Theorem \ref{W2p-est}. We also do not know how to obtain higher estimates for other exponents $\theta$ by the above method.
\end{rem}

%

\vskip 20pt

\section{An interior estimate in higher dimensions}\label{maindimh}

In this section, we will prove Theorem \ref{main-thm}. By  \cite{CG, Ca}, it suffices to get the interior estimates on the upper and lower bound of $\det D^2u$.
 Note that there exists a constant $c_{n}>0$ such that
\begin{equation}\label{elliptic}
\frac{\det D^2u}{c_{n}\cdot\Delta u} I \leq U^ {i j} \leq c_{n}(\Delta u)^{n-1} I.
\end{equation}
In view of \eqref{div2} and \eqref{elliptic}, we first consider the following degenerate linear elliptic equation
\beq\label{de-eq1}
-D_j(a_{ij}(x)D_i u)+c(x)u=f(x)\quad\text{in}\,\,\Omega.
\eeq
\begin{lem}\label{est}
Assume $\{a_{ij}(x)\}$ satisfies
\beq\label{elli}
\frac{d(x)}{\lambda(x)}\leq a_{ij}(x)\leq \lambda(x)^{n-1}\quad\text{in}\,\,\,B_1,
\eeq
where $\lambda(x)\in L^p(B_1)$, $d^{-1}\in L^q(B_1)$. Assume $c(x), f(x)\in L^{p_0}(B_1)$.
Let $u\in W^{1,p}(B_1)$ be a subsolution in the following sense:
\begin{equation}\label{eq}
\int_{B_1} a_{ij}D_{i}u D_{j}\varphi+cu\varphi\, dx\leq \int_{B_1} f\varphi\,dx\quad \text{for\,\,any}\,\, \varphi\in W^{1,p}_0(B_1)\text{\ and}\,\,\varphi\geq 0.
\end{equation}
Suppose $p$, $q$ and $p_0$ satisfy $\frac{n}{p}+\frac{1}{q}<\frac{2}{n}$ and $p_0\geq\frac{p}{n-1}$. Then
$$\sup_{B_{1/2}}u\leq C\left(\|u^+\|_{L^1(B_1)}+\|f\|_{L^{p_0}(B_1)}\right),$$
where $C$ depends only on $p$, $\|\lambda\|_{L^p(B_1)}$, $\|d^{-1}\|_{L^q(B_1)}$ and $\|c\|_{L^{p_0}(B_1)}$.
\end{lem}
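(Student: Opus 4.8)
The plan is to adapt the De Giorgi iteration scheme to the weighted setting, following the structure of Trudinger's Theorem 4.2 (quoted above as Theorem \ref{thm4.2}) but carrying along the extra term $c(x)u$ and the source $f$ in $L^{p_0}$. First I would reduce to the homogeneous-looking case by absorbing $f$: since the claimed bound is in terms of $\|u^+\|_{L^1(B_1)}+\|f\|_{L^{p_0}(B_1)}$, I would introduce the normalized quantity $k_0 := \|f\|_{L^{p_0}(B_1)}$ and work with truncations $u_k := (u-k)^+$ for levels $k \ge k_0$, so that wherever $u_k > 0$ one has $|f| \le |f| \le (|f|/k_0)\cdot u \cdot (\text{something})$ — more precisely I would split $\int f u_k$ and control it by $\|f\|_{L^{p_0}} \|u_k\|_{L^{p_0'}}$ on the set $\{u_k>0\}$, paying a factor of the measure of that set, which is exactly what the iteration needs.

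The key steps are: (i) the \textbf{Caccioppoli / energy inequality}: test \eqref{eq} with $\varphi = \eta^2 u_k$ for a cutoff $\eta$, use the lower ellipticity bound $a_{ij}\ge d/\lambda$ to get $\int d\lambda^{-1}\eta^2|Du_k|^2 \lesssim \int d\lambda^{-1}|D\eta|^2 u_k^2 + \int |c|\eta^2 u_k^2 + \int|f|\eta^2 u_k$, using Cauchy–Schwarz on the cross term and the upper bound $a_{ij}\le\lambda^{n-1}$ there; (ii) the \textbf{weighted Sobolev inequality}: from Trudinger's framework, the condition $\frac{n}{p}+\frac{1}{q}<\frac{2}{n}$ (with $d^{-1}\in L^q$, $\lambda\in L^p$) yields a Sobolev-type inequality $\|\eta u_k\|_{L^{2\kappa}}^2 \lesssim \int d\lambda^{-1}|D(\eta u_k)|^2$ for some $\kappa>1$ depending on $p,q$; (iii) \textbf{controlling the lower-order terms}: the terms $\int|c|\eta^2 u_k^2$ and $\int|f|\eta^2 u_k$ are handled by Hölder with exponent $p_0$ and interpolation between $L^2$ and $L^{2\kappa}$, where the hypothesis $p_0 \ge \frac{p}{n-1}$ guarantees $p_0$ is large enough that $|c|$ (and $|f|$) sits in a space dual to the Sobolev gain — the borderline exponent $p/(n-1)$ enters precisely because the upper ellipticity weight is $\lambda^{n-1}$ rather than $\lambda$, so one must compare $\|c\|_{L^{p_0}}$ against $\|\lambda\|_{L^p}^{n-1}$-type quantities; (iv) \textbf{the iteration}: combining (i)--(iii) on a nested family of balls $B_{r_j}$ with $r_j \downarrow 1/2$ and level cuts $k_j \uparrow$, one obtains a recursive inequality $A_{j+1} \le C^j B^{j}(k_{j+1}-k_j)^{-2\sigma} A_j^{1+\delta}$ for a suitable energy/measure quantity $A_j$, and the standard fast-geometric-convergence lemma (à la Lemma \ref{lem4.3} or the classical De Giorgi lemma) forces $A_j \to 0$, which gives $\sup_{B_{1/2}} u \le C(k_\infty)$ with $k_\infty \lesssim \|u^+\|_{L^1(B_1)} + k_0$ after the usual $L^2\to L^1$ improvement of the starting level.

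The \textbf{main obstacle} is step (iii): making the zero-order coefficient $c(x)$ and the inhomogeneity $f(x)$ play nicely with the \emph{degenerate} Sobolev inequality. In the uniformly elliptic case one absorbs $\int|c|u^2$ using only that $c\in L^{n/2+\varepsilon}$; here the Sobolev exponent $2\kappa$ is dictated by $p$ and $q$ jointly, and the effective integrability one can exploit for $c$ is weighted by powers of $\lambda$ coming from the non-symmetric ellipticity bounds $d/\lambda \le a_{ij}\le \lambda^{n-1}$. Verifying that $p_0\ge p/(n-1)$ is exactly the threshold that closes the Hölder/interpolation estimate — and tracking the constants so they depend only on $\|\lambda\|_{L^p}$, $\|d^{-1}\|_{L^q}$, $\|c\|_{L^{p_0}}$ and $p$ — is where the real bookkeeping lies; everything else is a faithful transcription of Trudinger's argument. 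A secondary technical point is justifying that $u\in W^{1,p}(B_1)$ is enough regularity for all the test-function manipulations (truncations of $u$ remain admissible competitors), which follows from the chain rule for Sobolev functions composed with Lipschitz truncations together with $p>n$.
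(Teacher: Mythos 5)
Your proposal is correct in its essentials but follows a genuinely different route from the paper. You set up a De Giorgi iteration: level-set truncations $u_k=(u-k)^+$, the Caccioppoli estimate from the test function $\varphi=\eta^2 u_k$, the weighted Sobolev inequality obtained by splitting the degenerate weight via H\"older, and a fast-geometric-convergence recursion on a measure/energy quantity. The paper instead runs a Moser iteration: it tests with the power-type function $\varphi=\eta^2(\overline u_m^{\beta}\overline u-k^{\beta+1})$ (where $\overline u=u^++k$ and $\overline u_m$ is a cut-off from above to keep $\varphi$ admissible), deduces a reverse-H\"older inequality $\|\overline u\|_{L^{\chi \gamma p/(p-n+1)}(B_r)}\leq (C\gamma)^{1/\gamma}(R-r)^{-2/\gamma}\|\overline u\|_{L^{\gamma p/(p-n+1)}(B_R)}$ for $\chi>1$, iterates over geometrically growing exponents to reach $L^\infty(B_r)\lesssim L^{2p/(p-n+1)}(B_R)$, and then uses interpolation, Young's inequality, and Lemma~\ref{lem4.3} to downgrade to an $L^1$ bound. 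Both schemes rest on the same two ingredients you identify --- the H\"older splitting $\|\lambda d^{-1}\|_{L^{pq/(p+q)}}$ to pass from the weighted energy to a genuine Sobolev norm, and the observation that $p_0\geq p/(n-1)$ makes the $c$-term's H\"older exponent $2p_0/(p_0-1)$ dominated by the $\lambda^{n-1}$-term's exponent $2p/(p-n+1)$ --- and both absorb $f$ by choosing $k=\|f\|_{L^{p_0}}$. Your De Giorgi version avoids the auxiliary truncation $\overline u_m$ (since $u_k$ is already bounded below) but requires the extra $L^2\to L^1$ upgrade at the end, which you correctly flag; the paper's Moser version hews more closely to Trudinger's Theorem~4.2 and gets the $L^1$ dependence from the interpolation step built into the iteration lemma. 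The constant tracking is comparable in either scheme, so both yield the stated dependence on $p$, $\|\lambda\|_{L^p}$, $\|d^{-1}\|_{L^q}$, $\|c\|_{L^{p_0}}$.
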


\begin{proof}
For some $k>0$ and $m>0$, set $\overline{u}=u^++k$ and
$$\overline{u}_m=\left\{\begin{array}{lc}\overline{u},&\ u<m,\\k+m,&\ u\geq m.\end{array}\right.$$
Let $\eta\in C^\infty_0(B_1)$. We choose a test function
$\varphi=\eta^2(\overline{u}_m^\beta \overline{u}-k^{\beta+1})\in W^{1,p}_0(B_1)$
for some $\beta\geq 0$ to be determined later. Substituting $\varphi$ into \eqref{eq}, we have
\begin{eqnarray*}
&&\beta\int_{B_1}a_{ij}D_i\overline{u} D_j\overline{u}_m \overline{u}_m^{\beta-1} \overline{u}\eta^2\, dx+\int_{B_1}a_{ij}D_i\overline{u} D_j\overline{u}\,\overline{u}_m^\beta \eta^2\, dx\\[4pt]
&&\ \ \ \ \ \ \ \leq -2\int_{B_1} a_{ij} D_i\overline{u} D_j\eta  \overline{u}_m^\beta \overline{u}  \eta \, dx+\int_{B_1} \left(|c|\eta^2 \overline{u}_m^\beta \overline{u}^2+|f|\eta^2 \overline{u}_m^\beta \overline{u}\right)\, dx.
\end{eqnarray*}
Note that $D\overline{u}=D\overline{u}_m$ in $\{u<m\}$ and $D\overline{u}_m=0$ in $\{u\geq m\}$. By the Cauchy inequality, we have
$$
\begin{aligned}
\beta&\int_{B_1}a_{ij}D_i\overline{u}_m D_j\overline{u}_m \overline{u}_m^{\beta-1} \overline{u}\eta^2\, dx+\int_{B_1}a_{ij}D_i\overline{u} D_j\overline{u}\, \overline{u}_m^\beta \eta^2\, dx\\[4pt]
&\leq \frac{1}{2}\int_{B_1} a_{ij} D_i\overline{u} D_j\overline{u} \, \overline{u}_m^\beta \eta^2\, dx+4\int_{B_1} a_{ij}D_i\eta D_j \eta \overline{u}_m^\beta \overline{u}^2\, dx+\int_{B_1} \left(|c|\eta^2 \overline{u}_m^\beta \overline{u}^2+|f|\eta^2 \overline{u}_m^\beta \overline{u}\right)\, dx.
\end{aligned}
$$
Then by \eqref{elli}
\begin{eqnarray}
&&\beta\int_{B_1}\frac{d}{\lambda(x)}|D\overline{u}_m|^2 \overline{u}_m^{\beta}\eta^2\, dx+\frac{1}{2}\int_{B_1}\frac{d}{\lambda(x)}|D\overline{u}|^2\overline{u}_m^{\beta}\eta^2\, dx\nonumber\\[4pt]
&\leq& \beta\int_{B_1}a_{ij}D_i\overline{u}_m D_j\overline{u}_m \overline{u}_m^{\beta-1} \overline{u}\eta^2\, dx+\frac{1}{2}\int_{B_1}a_{ij}D_i\overline{u} D_j\overline{u}\, \overline{u}_m^\beta \eta^2\, dx \nonumber\\[4pt]
&\leq& 4 \int_{B_1}\lambda(x)^{n-1}|D\eta|^2 \overline{u}_m^{\beta}\overline{u}^2\, dx+\int_{B_1}c_0\eta^2 \overline{u}_m^\beta \overline{u}^2\, dx,\label{a1}
\end{eqnarray}
where $c_0=|c|+\frac{|f|}{k}$.
Choose $k=\|f\|_{L^{p_0}(B_1)}$ if $f$ is not identically 0. Otherwise choose arbitrary $k>0$ and let $k\to 0^+$.
Let $w=\overline{u}_m^{\frac{\beta}{2}}\overline{u}$.  There holds
$$|Dw|^2=\left|\frac{\beta}{2}\overline{u}_m^{\frac{\beta}{2}-1}\overline{u}_m D\overline{u}_m+\overline{u}_m^{\frac{\beta}{2}}D\overline{u}\right|^2\leq (1+\beta)(\beta \overline{u}_m^{\beta}|D\overline{u}_m|^2+\overline{u}_m^\beta|D\overline{u}|^2).$$
Therefore by \eqref{a1}
$$\int_{B_1}\frac{d}{\lambda(x)}|Dw|^2\eta^2\, dx\leq 8(1+\beta)\int_{B_1} \lambda(x)^{n-1}|D\eta|^2 w^2\, dx+2(1+\beta)\int_{B_1}c_0w^2\eta^2\, dx.$$
By $|D(w\eta)|^2\leq 2|Dw|^2\eta^2+2|D\eta|^2 w^2$,
we have
\begin{eqnarray}
\int_{B_1} \frac{d}{\lambda(x)}|D(w\eta)|^2\, dx&\leq&\int_{B_1} \left(\frac{2d}{\lambda(x)}+16(1+\beta)\lambda(x)^{n-1}\right)|D\eta|^2w^2\, dx+4(1+\beta)\int_{B_1}c_0w^2\eta^2\, dx\nonumber\\[4pt]
&\leq &18(1+\beta)\int_{B_1}\lambda(x)^{n-1}|D\eta|^2w^2\, dx+4(1+\beta)\int_{B_1}c_0w^2\eta^2\, dx.\label{est-1}
\end{eqnarray}
Next, we deal with the $\lambda(x)$ in the above estimate. By the assumptions $\lambda\in L^p(B_1)$ and $d^{-1}\in L^q(B_1)$, we have $\lambda d^{-1}\in L^{\frac{pq}{p+q}}(B_1)$.
By H{\"o}lder's inequality, we have
\begin{eqnarray}
\left\|D(w\eta)\right\|_{L^{\frac{2pq}{pq+p+q}}(B_1)}^{2} &\leq&\|\lambda d^{-1}\|_{L^{\frac{pq}{p+q}}(B_1)} \cdot \int_{B_{1}} \frac{1}{\lambda(x)d^{-1}}\left|D(w \eta)\right|^{2}\,dx,\label{norm-est}\\[3pt]
\int_{B_{1}} \lambda(x)^{n-1}|D\eta|^2w^2\,dx&\leq&\|\lambda\|_{L^p(B_1)}\cdot\|wD\eta\|_{L^{\frac{2p}{p-n+1}}(B_1)}^2,\label{norm-est-1}\\[3pt]
\int_{B_1}c_0w^2\eta^2\,dx&\leq&\|c_0\|_{L^{p_0}(B_1)}\cdot\left(\int_{B_1}(\eta w)^{\frac{2{p_0}}{{p_0}-1}}\,dx\right)^{1-\frac{1}{{p_0}}}.\label{norm-est-3}
\end{eqnarray}
Combining \eqref{est-1}-\eqref{norm-est-3},  we get
\begin{eqnarray*}
\|D(w\eta)\|_{L^{\frac{2pq}{pq+p+q}}(B_1)}&\leq& C(1+\beta)^{\frac{1}{2}}\left(\|wD\eta\|_{L^{\frac{2p}{p-n+1}}(B_1)}+\|w\eta\|_{L^{\frac{2{p_0}}{{p_0}-1}}(B_1)}\right)\\[4pt]
&\leq& C(1+\beta)^{\frac{1}{2}}\left(\|wD\eta\|_{L^{\frac{2p}{p-n+1}}(B_1)}+\|w\eta\|_{L^{\frac{2p}{p-n+1}}(B_1)}\right)
\end{eqnarray*}
for $C$ depending on $\|\lambda\|_{L^p(B_1)}$, $\|d^{-1}\|_{L^q(B_1)}$, $\|c\|_{L^{p_0}(B_1)}$. Here we used $p_0\geq\frac{n-1}{p}$.
By the Sobolev inequality
\begin{equation*}
\|w\eta\|_{L^\alpha(B_1)}\leq C(1+\beta)^{\frac{1}{2}}\left(\|wD\eta\|_{L^{\frac{2p}{p-n+1}}(B_1)}+\|w\eta\|_{L^{\frac{2p}{p-n+1}}(B_1)}\right),
\end{equation*}
where
\beq\label{alp}
\frac{1}{\alpha}=\frac{pq+p+q}{2pq}-\frac{1}{n}.
\eeq
Now for any $0<r<R\leq 1$, we choose a cutoff function $\eta\in C_0^\infty(B_R)$ such that
$$0\leq\eta\leq 1,\quad\eta\equiv 1\,\,\text{in}\,\, B_r\quad\text{and}\quad |D\eta|\leq \frac{2}{R-r}.$$
Then we obtain
$$\|w\|_{L^\alpha(B_r)}\leq \frac{C(1+\beta)^{\frac{1}{2}}}{R-r}\|w\|_{L^{\frac{2p}{p-n+1}}(B_R)}.$$
By \eqref{alp} and the assumption $\frac{n}{p}+\frac{1}{q}<\frac{2}{n}$,
we have $\alpha>\frac{2p}{p-n+1}$. We can do the iteration as follows.

Recalling the definition of $w$, we have
$$\|\overline{u}_m^{\frac{\beta}{2}}\overline{u}\|_{L^\alpha(B_r)}\leq\frac{C(1+\beta)^{\frac{1}{2}}}{R-r}\|\overline{u}_m^{\frac{\beta}{2}}\overline{u}\|_{L^{\frac{2p}{p-n+1}}(B_R)}.$$
Set $\gamma=\beta+2\geq 2$. By $\overline{u}_m\leq \overline{u}$, we obtain
$$\|\overline{u}_m^{\frac{\gamma}{2}}\|_{L^\alpha(B_r)}\leq\frac{C\gamma^{\frac{1}{2}}}{R-r}\|\overline{u}^{\frac{\gamma}{2}}\|_{L^{\frac{2p}{p-n+1}}(B_R)}.$$
Letting $m\to\infty$, we get
$$\|\overline{u}^{\frac{\gamma}{2}}\|_{L^\alpha(B_r)}\leq\frac{C\gamma^{\frac{1}{2}}}{R-r}\|\overline{u}^{\frac{\gamma}{2}}\|_{L^{\frac{2p}{p-n+1}}(B_R)},$$
i.e.,
$$\|\overline{u}\|_{L^{{\frac{\gamma}{2}}\alpha}(B_r)}\leq\frac{(C\gamma)^{\frac{1}{\gamma}}}{(R-r)^{\frac{2}{\gamma}}}\|\overline{u}\|_{L^{{\frac{\gamma}{2}}\frac{2p}{p-n+1}}(B_R)}.$$
Denote $\chi=\alpha\cdot\frac{p-n+1}{2p}>1$. Then
\begin{equation}\label{itt}
\|\overline{u}\|_{L^{{\frac{{\gamma}p}{p-n+1}\chi}}(B_r)}\leq\frac{(C\gamma)^{\frac{1}{\gamma}}}{(R-r)^{\frac{2}{\gamma}}}\|\overline{u}\|_{L^{{\frac{{\gamma}p}{p-n+1}}}(B_R)}.
\end{equation}
We iterate \eqref{itt} to get the desired estimate. Set
$$\gamma_i=2\chi^{i}\quad\text{and}\quad R_i=r+\frac{R-r}{2^{i}},\  i=0,1,2,\cdots,$$
i.e.,
$$\gamma_i=\chi\gamma_{i-1}\ \text{and} \ R_{i-1}-R_i=\frac{R-r}{2^{i}},\  i=1,2,\cdots.$$
By \eqref{itt},
$$\|\overline{u}\|_{L^{\frac{2p}{p-n+1}\chi^{i+1}}(B_{R_{i+1}})}
\leq C^{\sum_{j=0}^i\frac{1}{\gamma_j}}\cdot\prod_{j=0}^i \gamma_j^{\frac{1}{\gamma_j}}\cdot 4^{\sum_{j=0}^i\frac{j}{\gamma_j}}\frac{1}{(R-r)^{\sum_{j=0}^i\frac{2}{\gamma_j}}}\cdot\|\overline{u}\|_{L^{\frac{2p}{p-n+1}}(B_R)}.$$
Letting $i\to\infty$, by Young's inequality, we have
$$\begin{aligned}
\|\overline{u}\|_{L^\infty(B_r)}&\leq \frac{C}{(R-r)^{\frac{\chi}{\chi-1}}}\|\overline{u}\|_{L^{\frac{2p}{p-n+1}}(B_R)}\\[4pt]
&=\frac{C}{(R-r)^{\frac{\chi}{\chi-1}}}\|\overline{u}\|_{L^{1}(B_R)}^{\frac{p-n+1}{2p}}\cdot\|\overline{u}\|_{L^{\infty}(B_R)}^{\frac{p+n-1}{2p}}\\[4pt]
&\leq \frac{1}{2}\|\overline{u}\|_{L^{\infty}(B_R)}+\frac{C}{(R-r)^{\frac{\chi}{\chi-1}\cdot\frac{2p}{p-n+1}}}\|\overline{u}\|_{L^1(B_R)}.
\end{aligned}$$
Set $f(t)=\|\overline{u}\|_{L^\infty(B_t)}$ for $t\in (0,1].$ Then for any $0<r<R\leq 1$
$$f(r)\leq \frac{1}{2}f(R)+\frac{C}{(R-r)^{\frac{\chi}{\chi-1}\cdot\frac{2p}{p-n+1}}}\|\overline{u}\|_{L^1(B_1)}.$$
We apply  Lemma \ref{lem4.3} to get
$$f(r)\leq \frac{C}{(R-r)^{\frac{\chi}{\chi-1}\cdot\frac{2p}{p-n+1}}}\|\overline{u}\|_{L^1(B_1)}.$$
The lemma follows by choosing $r=\frac{1}{2}$ and $R=1$.
\end{proof}

Now we can use Lemma \ref{est} to obtain the interior estimates for  \eqref{4-eq}. For simplicity, we only consider the homogeneous equation.
\begin{proof}[{Proof of Theorem \ref{main-thm}}]
Denote $a_{ij}=U^{ij}$, $\lambda=\Delta u$ and $d=\det D^2u$.
Firstly, we consider the case $0\leq \theta<1$.
We apply Lemma \ref{est} to equation \eqref{4-eq-h}, which yields
$$\sup_{B_{1/2}}w\leq C\|w\|_{L^1(B_1)}\leq C\|w\|_{L^{\frac{q}{1-\theta}}(B_1)}.$$
Since $w=(\det D^2 u)^{-(1-\theta)}\in L^{\frac{q}{1-\theta}}(B_1),$ we know
$\sup_{B_{1/2}}\{(\det D^2 u)^{-1}\}\leq C$.
For the upper bound of the determinant, we set $w=\frac{1}{v}$. A direct calculation yields
$$w_i=-\frac{v_i}{v^2},\ \
w_{ij}=\frac{2v_iv_j}{v^3}-\frac{v_{ij}}{v^2}.$$
Then $v$ satisfies
$$0=U^{ij}w_{ij}=\frac{2}{v^3}U^{ij}v_iv_j-\frac{1}{v^2}U^{ij}v_{ij}\geq-\frac{1}{v^2}U^{ij}v_{ij},$$
i.e.,
$$-D_i(U^{ij}D_jv)\leq 0\quad \text{in}\,\,B_{1}.$$
Similarly, we use Lemma \ref{est} to obtain
$$\sup_{B_{1/2}}v\leq C\|v\|_{L^1(B_1)}\leq C\|v\|_{L^{\frac{p}{n(1-\theta)}}(B_1)},$$
which implies
$\sup_{B_{1/2}} \det D^2u\leq C$.

Next, we consider the case $\theta=1$. Write  $w=\log d$, where $d\in L^{\frac{p}{n}}(B_1)$. Then we have
$$
w_i=\frac{d_i}{d},\ \
w_{ij}=-\frac{d_id_j}{d^2}+\frac{d_{ij}}{d},$$
which yields
$$0=U^{ij}w_{ij}=-\frac{1}{d^2}U^{ij}d_id_j+\frac{1}{d}U^{ij}d_{ij}\leq\frac{1}{d}U^{ij}d_{ij},$$
i.e.,
$$-D_i(U^{ij}D_jd)\leq 0\quad \text{in}\,\,B_{1}.$$
Similarly, write $w=-\log z$ with $z=(\det D^2u)^{-1}\in L^{q}(B_1)$. We have
$$w_i=-\frac{z_i}{z},\ \ w_{ij}=\frac{z_iz_j}{z^2}-\frac{z_{ij}}{z},$$
which means
$$0=U^{ij}w_{ij}=\frac{1}{z^2}U^{ij}z_iz_j-\frac{1}{z}U^{ij}z_{ij}\geq-\frac{1}{z}U^{ij}z_{ij},$$
i.e.,
$$-D_i(U^{ij}D_jz)\leq 0\quad \text{in}\,\,B_{1}.$$
Then we use Lemma \ref{est} to obtain the bounds of $\det D^2u$.

Once we have the determinant estimates, all the interior estimates follow.
\end{proof}

\begin{rem}
To get the upper bound of $\det D^2u$, we only need $u\in W^{2,p}(B_1) $ with $p>\frac{n^2}{2}$. In fact, by taking $\det D^2 u=v^{\frac{1}{1-\theta}}\ (0\leq\theta<1)$,  where $v$ defined as the proof of Theorem \ref{main-thm},
condition \eqref{elli} in Lemma \ref{est} becomes
$$\frac{v^{\frac{1}{1-\theta}}}{\lambda(x)}\leq a_{ij}(x)\leq \lambda(x)^{n-1}$$
Then the upper bound can be obtained by similar arguments as in Lemma \ref{est} without assumption on $(\det D^2u)^{-1}$.
\end{rem}

\begin{rem}
We can also consider the inhomogeneous equation
\begin{equation}\label{non-hom-eq}
D_j(U^{ij}D_i w)=f\quad\,\,\text{in}\,\,B_1
\end{equation}
where $f\in L^{{p_0}}(B_1).$ For the lower bound of $\det D^2u$, if we assume ${p_0}\geq\frac{p}{n-1}$, we can apply Lemma \ref{est} to \eqref{non-hom-eq} directly for all $\theta\in [0,1)$. However, for the upper bound of $\det D^2u$, $v=\frac{1}{w}$ satisfies
\begin{equation}\label{non-hom-eq1}
-D_j(U^{ij}D_i v)\leq fv^2\quad\,\,\text{in}\,\,B_1.
\end{equation}
Even we assume ${p_0}=\infty$, we only know $fv\in L^{\frac{p}{n(1-\theta)}}(B_1)$. Then we can only
apply Lemma \ref{est} to the equation \eqref{non-hom-eq1} to get the upper bound of $\det D^2u$
when $\frac{1}{n}\leq\theta<1$. Hence, we have all the higher interior estimates for $\theta\in[\frac{1}{n}, 1)$. Note that the case $\theta=0$ (Abreu's equation) and $\theta=\frac{1}{n+2}$ (affine mean curvature equation) are not included.
\end{rem}

Finally, we prove a Liouville type theorem.
\begin{proof}[Proof of Corollary \ref{Liouville}]
For $u$ in any $B_R\subset\mathbb R^n$, define
$u_R(x)=\frac{1}{R^2}u(Rx)$. Tthen we know that $U_R^{ij}(w_{R})_{ij}=0$ in $B_1.$ Since $u$ satisfies \eqref{uni-bound}, we know $u_R$ satisfies
$$\int_{B_1} |D^2 u_R|^p+(\det D^2 u_R)^{-q}\,dx\leq C.$$
Applying Theorem \ref{main-thm} to $u_R$, we find that there exists a $C>0$ independent of $R$, such that $\|u_R\|_{C^{4,\alpha}(B_{1/2})}\leq C$. In particular, we know that $\|D^3u_R\|_{L^\infty(B_{1/2})}\leq C$.  Hence
$$\|D^3u(Rx)\|_{L^\infty(B_{\frac{1}{2}})}\leq \frac{C}{R},$$
i.e.,
$$\|D^3u(x)\|_{L^\infty(B_{\frac{R}{2}})}\leq \frac{C}{R}.$$
Let $R\to +\infty$, we have $D^3u\equiv 0,$ which means $u$ is a quadratic function.
\end{proof}


\end{document}